\theoremstyle{plain}
\numberwithin{equation}{section}
\newtheorem{lemma}[equation]{Lemma}
\newtheorem{proposition}[equation]{Proposition}
\newtheorem{corollary}[equation]{Corollary}
\newtheorem{conjecture}[equation]{Conjecture}
\newtheorem{openprob}[equation]{Open problem}
\newtheorem{definition}[equation]{Definition}
\begin{document}

\begin{frontmatter}

\title{Rich square-free words}

\author{Jetro Vesti}
\ead{jejove@utu.fi}

\address{University of Turku, Department of Mathematics and Statistics, 20014 Turku, Finland}

\begin{abstract}
A word $w$ is \emph{rich} if it has $|w|+1$ many distinct palindromic factors, including the empty word.
A word is \emph{square-free} if it does not have a factor $uu$, where $u$ is a non-empty word.

Pelantov\'a and Starosta (Discrete Math. 313 (2013)) proved that every infinite rich word contains a square.
We will give another proof for that result.
Pelantov\'a and Starosta denoted by $r(n)$ the length of a longest rich square-free word on an alphabet of size $n$.
The exact value of $r(n)$ was left as an open question. We will give an upper and a lower bound for $r(n)$,
and make a conjecture that our lower bound is exact.

We will also generalize the notion of repetition threshold for a limited class of infinite words.
The repetition thresholds for episturmian and rich words are left as an open question.
\end{abstract}

\begin{keyword} Combinatorics on words, Palindromes, Rich words, Square-free words, Repetition threshold.
\MSC 68R15
\end{keyword}

\journal{Theoretical Computer Science}

\end{frontmatter}


\section{Introduction}

In recent years, rich words and palindromes have been studied extensively in combinatorics on words.
A word is a \emph{palindrome} if it is equal to its reversal.
In \cite{djp}, the authors proved that every word $w$ has at most $|w|+1$ many distinct palindromic factors, including the empty word.
The class of words which achieve this limit was introduced in \cite{bhnr} with the term \emph{full} words.
When the authors of \cite{gjwz} studied these words thoroughly they called them \emph{rich} (in palindromes).
Rich words have been studied in various papers, for example in \cite{afmp}, \cite{bdgz1}, \cite{bdgz}, \cite{lgz}, \cite{rr} and \cite{v}.

The \emph{defect} of a finite word $w$, denoted $D(w)$, is defined as $D(w)=|w|+1-|\textrm{Pal}(w)|$, where $\textrm{Pal}(w)$ is
the set of palindromic factors in $w$. The \emph{defect} of an infinite word $w$ is defined as $D(w)=\textrm{sup}\{D(u)|\ u\ \textrm{is a factor of}\ w\}$.
In other words, the defect tells how many palindromes the word is lacking.
Rich words are exactly those whose defect is equal to 0.

The authors of \cite{ps} proved, in Theorem 4 of the article, that every recurrent word with finite $\Theta$-defect contains infinitely many overlapping factors.
An \emph{overlapping} word is a word of form $uuv$, where $v$ is a non-empty prefix of $u$.
A word is a $\Theta$-\emph{palindrome} if it is a fixed point of an involutive antimorphism $\Theta$.
The reversal mapping $R$ is an involutive antimorphism, which means that if $\Theta=R$ then $\Theta$-defect is equal to the defect.
This means Theorem 4 in \cite{ps} holds also for normal defect and normal palindromes.
In this article we will restrict ourselves to the case where $\Theta$ is the reversal mapping.

Since every rich word has a finite defect and every overlapping factor $uuv$ has a square $uu$,
a corollary of Theorem 4 in \cite{ps} is that every recurrent rich word contains a square.
This was noted in \cite{ps} as Remark 6, where the word \emph{recurrent} was replaced with \emph{infinite}.
This can be done, since every infinite rich word $x$ has a recurrent point $y$ in the shift orbit closure of $x$ (see e.g. Section 4 of \cite{q}).
We know $y$ has a square, which means $x$ has a square.
In Corollary \ref{cor1} of our article, we will give another proof of the result in Remark 6 of \cite{ps}.

In Remark 6 of \cite{ps} there was also noted that since every rich square-free word is finite, we can look for a longest one.
The length of a longest such word, on an alphabet of size $n$, was denoted by $r(n)$.
An explicit formula for $r(n)$ was left as an open question.

In Section \ref{lower} we will construct recursively a sequence of rich square-free words, the lengths of which give us a lower bound for $r(n)$.
We will also make a conjecture that $r(n)$ can be achieved using these words.
In Section \ref{upper} we will prove an upper bound for $r(n)$.

\subsection{Repetition threshold}

Square-free words are a special case of unavoidable repetitions of words, which has been a central topic in combinatorics on words since Thue (see \cite{t1} and \cite{t2}).
The \emph{repetition threshold}, on an alphabet of size $n$, is the smallest number $r$ such that there exists an infinite word which avoids greater than $r$-powers.
This number is denoted by $RT(n)$ and it was first studied in \cite{d}, where Dejean gave her famous conjecture.
This conjecture has now been proven, in many parts and by several authors (see \cite{r} and \cite{cr}).

The repetition threshold can be studied also for a limited class of infinite words.
In \cite{mp}, it was proven that the infinite Fibonacci word does not contain a power with exponent greater than $2+\varphi$, where $\varphi$ is the golden ratio $\frac{\sqrt{5}+1}{2}$, but every smaller fractional power is contained.
In \cite{cd}, the authors proved that among \emph{Sturmian} words, the Fibonacci word is optimal with respect to this property.
Sturmian words are equal to \emph{episturmian} words when $n=2$ (see \cite{djp}).
This means the \emph{episturmian repetition threshold} for $n=2$ is $2+\varphi$, denoted $ERT(2)=2+\varphi$.
From \cite{gj}, we get that the $n$-bonacci word is episturmian and it has critical exponent $2+1/(\varphi_n-1)$, where $\varphi_n$ is the generalized golden ratio.
This means $ERT(n)\leq 2+1/(\varphi_n-1)$. Notice, from \cite{hps} we get that $\varphi_n$ converges to $2$.

In the same way, we define the \emph{rich repetition threshold} $RRT(n)$. From \cite{ps} we get that $RRT(n)\geq 2$.
Since episturmian words are rich (see \cite{djp}), we also know $RRT(n)\leq 2+1/(\varphi_n-1)$ and $ERT(n)\geq 2$. This means $2\leq RRT(n),ERT(n)\leq 2+1/(\varphi_n-1)$.
The exact values of $ERT(n)$ and $RRT(n)$ are left as an open problem.

\begin{openprob}\label{open}
Determine the repetition threshold for episturmian words and for rich words, on an alphabet of size $n$.
\end{openprob}

\subsection{Preliminaries}

An \emph{alphabet} $A$ is a non-empty finite set of symbols, called \emph{letters}.
A \emph{word} is a finite sequence of letters from $A$.
The \emph{empty} word $\epsilon$ is the empty sequence.
The set $A^*$ of all finite words over $A$ is a \emph{free monoid} under the operation of concatenation.
The set $\textrm{Alph}(w)$ is the set of all letters that occur in $w$.
If $|\textrm{Alph}(w)|=n$ then we say that $w$ is \emph{$n$-ary}.

An \emph{infinite word} is a sequence indexed by $\mathbb{N}$ with values in $A$.
We denote the set of all infinite words over $A$ by $A^\omega$ and define $A^\infty=A^*\cup A^\omega$.

The \emph{length} of a word $w=a_1 a_2 \ldots a_n$, with each $a_i\in A$, is denoted by $|w|=n$.
The empty word $\epsilon$ is the unique word of length $0$. By $|w|_a$ we denote the number of occurrences of a letter $a$ in $w$. 

A word $x$ is a \emph{factor} of a word $w\in A^\infty$, denoted $x\in w$, if $w=uxv$ for some $u\in A^*$,$v\in A^\infty$.
If $x$ is not a factor of $w$, we denote $x\notin w$.
If $u=\epsilon$ (resp. $v=\epsilon$) then we say that $x$ is a \emph{prefix} (resp. \emph{suffix}) of $w$.
If $w=uv\in A^*$ is a word, we use the notation $u^{-1}w=v$ or $wv^{-1}=u$ to mean the removal of a prefix or a suffix of $w$.
We say that a prefix or a suffix of $w$ is \emph{proper} if it is not the whole $w$.

A factor $x$ of a word $w$ is said to be \emph{unioccurrent} in $w$ if $x$ has exactly one occurrence in $w$.
Two occurrences of factor $x$ are said to be \emph{consecutive} if there is no occurrence of $x$ between them.
A factor of $w$ having exactly two occurrences of a non-empty factor $u$, one as a prefix and the other as a suffix,
is called a \emph{complete return} to $u$ in $w$.

The \emph{reversal} of $w=a_1 a_2\ldots a_n$ is defined as $\widetilde{w}=a_n\ldots a_2 a_1$.
A word $w$ is called a \emph{palindrome} if $w=\widetilde{w}$. The empty word $\epsilon$ is assumed to be a palindrome.

Other basic definitions and notation in combinatorics on words can be found from Lothaire's books \cite{l1} and \cite{l2}.

\begin{proposition}\label{p0}(\cite{djp}, Prop. 2)
A word $w$ has at most $|w|+1$ distinct palindromic factors, including the empty word.
\end{proposition}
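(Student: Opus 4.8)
The plan is to argue by induction on the length $|w|$, with the crux being that extending a word by a single letter creates at most one new distinct palindromic factor. For the base case, the empty word has exactly one palindromic factor, namely $\epsilon$ itself, which matches $|\epsilon|+1=1$. For the inductive step I would write $w=va$ with $v\in A^*$ and $a\in A$, assume $|\textrm{Pal}(v)|\le |v|+1$, and aim to prove the single-letter increment bound $|\textrm{Pal}(va)|\le |\textrm{Pal}(v)|+1$; combined with $|va|=|v|+1$ this immediately yields the claim.

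The heart of the matter is the observation that every palindromic factor of $va$ which is not already a palindromic factor of $v$ must occur in $va$ with its final letter being the appended $a$; that is, it must be a palindromic \emph{suffix} of $va$. Indeed, any occurrence of a factor that avoids the last position lies entirely inside $v$. Hence it suffices to show that $va$ has at most one palindromic suffix that fails to appear as a factor of $v$.

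To establish this, I would single out the longest palindromic suffix $s$ of $va$ and argue that every strictly shorter palindromic suffix $t$ of $va$ already occurs in $v$. The key step is that $t$, being a palindromic suffix of the palindrome $s$, is also a prefix of $s$: since $s=\widetilde{s}$ and $t=\widetilde{t}$, reversing the suffix occurrence of $t$ inside $s$ turns it into a prefix occurrence. This prefix occurrence of $t$ in $s$ ends strictly before the last letter of $s$ (because $|t|<|s|$), hence strictly before the last letter of $va$, and therefore lies inside $v$. Thus only $s$ can be genuinely new, which gives the desired increment of at most one.

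The main obstacle I anticipate is making the ``occurs earlier'' bookkeeping fully rigorous, in particular verifying that the prefix occurrence of $t$ inside $s$ really is an occurrence inside $v$ and does not sneak into the final position. This hinges on the strict inequality $|t|<|s|$ together with $s$ being a suffix of $va$, so I would track positions carefully there. Everything else, namely the base case and the arithmetic of the induction, is routine once this single-letter lemma is in hand.
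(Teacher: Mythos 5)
Your proof is correct. The paper does not prove this statement at all---it cites it as Proposition 2 of Droubay, Justin and Pirillo---and your argument (induction on $|w|$, with the key lemma that when passing from $v$ to $va$ only the longest palindromic suffix of $va$ can be a genuinely new palindromic factor, since any shorter palindromic suffix is, by the palindrome-of-a-palindrome reversal argument, also a prefix of that longest suffix and hence already occurs inside $v$) is precisely the classical proof given in that reference.
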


\begin{definition}
A word $w$ is \emph{rich} if it has exactly $|w|+1$ distinct palindromic factors, including the empty word.
An infinite word is \emph{rich} if all of its factors are rich.
\end{definition}

\begin{proposition}\label{p3}(\cite{gjwz}, Thm. 2.14)
A finite or infinite word $w$ is rich if and only if all complete returns to any palindromic factor in $w$ are themselves palindromes.
\end{proposition}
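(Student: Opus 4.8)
The plan is to reduce everything to finite words and to invoke the incremental characterization of richness from \cite{djp}: a finite word is rich precisely when, reading it letter by letter, each prefix $p$ has a longest palindromic suffix $\mathrm{lps}(p)$ that is unioccurrent in $p$ (equivalently, each newly read letter contributes exactly one new palindromic factor, namely $\mathrm{lps}(p)$). For an infinite $w$, richness means every finite factor is rich, and since any complete return is itself a finite factor, both implications follow from the corresponding statements about finite factors; so I would first record this reduction and then argue for finite words.

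For the forward direction, assume $w$ is rich, let $u$ be a palindromic factor and let $r$ be a complete return to $u$. Since $r$ is a factor of a rich word it is rich, so I would apply the unioccurrence property to the factor $r$ \emph{itself}, setting $s=\mathrm{lps}(r)$; the key gain from using $r$ rather than a longer prefix is that $s$ is automatically a suffix of $r$, so $|s|\le|r|$. Because $u$ is a palindromic suffix of $r$ we get $|s|\ge|u|$, and since the palindrome $s$ has the palindrome $u$ as a suffix, $u$ is also a prefix of $s$; thus $s$ begins and ends with an occurrence of $u$. Now $u$ occurs exactly twice in $r$: at position $0$ and as the suffix. As $s$ ends where $r$ ends, the terminal $u$ of $s$ is the suffix occurrence, while the initial $u$ of $s$ sits at position $|r|-|s|$. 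If $s$ were a proper suffix this position would be strictly positive, forcing it to coincide with the suffix occurrence, i.e. $s=u$ — but $u$ occurs twice in $r$, contradicting unioccurrence of $s$. Hence $s=r$, so $r$ is a palindrome. I expect this occurrence-counting step to be the main obstacle, and the decisive idea is to apply unioccurrence to $r$ itself so that $s$ cannot be longer than $r$.

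For the converse I would argue by contradiction. Suppose every complete return to every palindromic factor of $w$ is a palindrome but some finite factor of $w$ fails to be rich; then some prefix $p$ of that factor (hence a factor of $w$) has $s=\mathrm{lps}(p)$ not unioccurrent. So $s$ has an occurrence in $p$ preceding its suffix occurrence; taking the closest such occurrence, the factor running from it to the end of $p$ is a complete return to the palindrome $s$, call it $c$. By hypothesis $c$ is a palindrome. But $c$ is a palindromic suffix of $p$ with $|c|>|s|$, contradicting that $s=\mathrm{lps}(p)$ is the \emph{longest} palindromic suffix of $p$. Hence every prefix of every finite factor of $w$ has a unioccurrent longest palindromic suffix, and $w$ is rich.

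Finally I would verify the degenerate configurations so the counting is watertight: the two occurrences of $u$ in a complete return may overlap (as with $u=aba$, $r=ababa$), but there are still exactly two of them, so the argument pinning the initial $u$ of $s$ to one of the two positions is unaffected; and the trivial case $r=u$ never arises, since a complete return carries two distinct occurrences, which is exactly what forces $|c|>|s|$ in the converse. With these checks the two directions combine to give the stated equivalence.
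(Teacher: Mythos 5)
This proposition is not proved in the paper at all: it is imported verbatim as Theorem 2.14 of \cite{gjwz}, so there is no internal proof to compare against. Your argument is correct, and it is essentially the standard proof of that theorem. Both directions check out: in the forward direction, the decisive move of applying the unioccurrent-longest-palindromic-suffix property to the complete return $r$ itself (rather than to a longer ambient prefix) correctly pins the prefix occurrence of $u$ inside $s=\mathrm{lps}(r)$ to one of the exactly two occurrences of $u$ in $r$, ruling out $s=u$ by unioccurrence and forcing $s=r$; in the converse, taking two consecutive occurrences of the non-unioccurrent $\mathrm{lps}(p)$ does produce a complete return in $w$ (a factor with exactly two occurrences of $s$, as prefix and suffix), and the hypothesis then yields a palindromic suffix of $p$ strictly longer than $\mathrm{lps}(p)$, a contradiction; your checks of the overlapping and degenerate cases are also right. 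The one caveat is that your proof is only as self-contained as the ``incremental'' characterization you lean on — that a finite word is rich if and only if every prefix has a unioccurrent longest palindromic suffix. That fact is implicit in the proof of Proposition 2 of \cite{djp} (and explicit in \cite{gjwz}), but it is a genuinely nontrivial lemma, so strictly speaking you have traded one citation (\cite{gjwz}, Thm.\ 2.14) for another; what your write-up buys is a transparent derivation of the return-word characterization from that more elementary unioccurrence property, which is exactly how the result is established in the literature.
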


Let $w=vu$ be a word and $u$ its longest palindromic suffix. The \emph{palindromic closure} of $w$ is defined as $w^{(+)}=vu\widetilde{v}$.
If $u$ is the longest \emph{proper} palindromic suffix of $w$, called ${\rm lpps}$, we define the \emph{proper palindromic closure} of $w$ the same way as $w^{(++)}=vu\tilde{v}$.
We refer to the longest proper palindromic prefix of $w$ as ${\rm lppp}$ and define
the \emph{proper palindromic prefix closure} of $w$ as $^{(++)}w=\widetilde{\widetilde{w}^{(++)}}$.

\begin{proposition}\label{p4}(\cite{gjwz}, Prop. 2.6)
Palindromic closure preserves richness. 
\end{proposition}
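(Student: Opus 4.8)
The plan is to verify that $w^{(+)}$ is rich by means of the classical characterization of richness through longest palindromic suffixes: a word $z$ is rich if and only if, for every prefix $p$ of $z$, the longest palindromic suffix ${\rm lps}(p)$ of $p$ is unioccurrent in $p$ (see \cite{djp}); equivalently, each one-letter extension creates exactly one new palindrome. Write $w=vu$ with $u={\rm lps}(w)$, so that $w^{(+)}=vu\widetilde v$, and let $v^{(i)}$ denote the suffix of $v$ of length $i$, so that $\widetilde{v^{(i)}}$ is the length-$i$ prefix of $\widetilde v$. Since $w$ is a prefix of $w^{(+)}$ and $w$ is rich, every prefix of $w^{(+)}$ of length at most $|w|$ already has a unioccurrent ${\rm lps}$. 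Hence it suffices to treat the $k=|v|$ longer prefixes $p_i=w\,\widetilde{v^{(i)}}=vu\widetilde{v^{(i)}}$ for $1\le i\le k$, and to exhibit for each of them a unioccurrent longest palindromic suffix.

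The natural candidate is $P_i:=v^{(i)}u\widetilde{v^{(i)}}$, which is a palindrome because $u$ is, and which is a suffix of $p_i$ of length $|u|+2i$. The first key step is to show that $P_i={\rm lps}(p_i)$. I would argue by contradiction: a strictly longer palindromic suffix $q$ of $p_i$ is, being a palindrome, also a prefix of $\widetilde{p_i}$; peeling off the common tail $\widetilde{v^{(i)}}$ and comparing $q$ with $\widetilde q$ forces $q=v^{(i)}x\widetilde{v^{(i)}}$ with $x=\widetilde x$ a palindrome of length $|q|-2i>|u|$. But $x$ is then a palindromic suffix of $w$ longer than $u$, contradicting the maximality of $u={\rm lps}(w)$. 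This step uses only that $u$ is the longest palindromic suffix of $w$, not full richness, and it is the clean computational core of the argument.

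The second, and I expect more delicate, step is to prove that $P_i$ is unioccurrent in $p_i$. Here I would first record a consequence of richness coming from the complete-return criterion (Proposition \ref{p3}): in a rich word $w$ the suffix $u={\rm lps}(w)$ is unioccurrent in $w$, for otherwise the complete return to $u$ formed by its two rightmost occurrences would be a palindromic suffix of $w$ strictly longer than $u$. Now suppose $P_i$ had an occurrence in $p_i$ other than the suffix one, and consider its \emph{first} occurrence, ending at some position $e<|p_i|$. If $e\le|w|$, then $P_i$ is a factor of $w$, and the copy of $u$ sitting at the centre of this $P_i$ is followed inside $w$ by the nonempty block $\widetilde{v^{(i)}}$, giving a second occurrence of $u$ in $w$ and contradicting its unioccurrence. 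If instead $|w|<e<|p_i|$, then this copy of $P_i$ is a palindromic suffix of the shorter prefix $p_t$ with $t=e-|w|<i$, of length $|u|+2i>|u|+2t=|{\rm lps}(p_t)|$ by the first step applied to $p_t$, again a contradiction. Thus $P_i$ is unioccurrent, and the characterization yields that $w^{(+)}$ is rich.

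The main obstacle I anticipate is precisely the bookkeeping in this last step: one must correctly localize a hypothetical extra occurrence of $P_i$ (inside $w$, or inside the appended reversal $\widetilde{v^{(i)}}$) and close each case with the right tool — the unioccurrence of $u$ in the first case, the first step in the second. A tidier but essentially equivalent route would run entirely through Proposition \ref{p3}, checking that every complete return to a palindrome in $w^{(+)}$ is a palindrome; there the delicate case is a return straddling the centre of the palindrome $w^{(+)}$, which I would again reduce, via the reflective symmetry of $w^{(+)}$ and the unioccurrence of $u$, to the corresponding statement inside $w$.
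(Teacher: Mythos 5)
The paper gives no proof of this proposition at all: it is imported by citation from \cite{gjwz} (Prop.~2.6), so there is no internal argument to compare yours against, and your blind proof must stand on its own. It does. The characterization you invoke (a word is rich if and only if every nonempty prefix has a unioccurrent longest palindromic suffix) is exactly the content of the proof of Proposition \ref{p0} in \cite{djp}, and both of your steps are correct. In Step 1, a palindromic suffix $q$ of $p_i=vu\widetilde{v^{(i)}}$ with $|q|>|u|+2i$ must indeed have the form $v^{(i)}x\widetilde{v^{(i)}}$ with $x$ a palindromic suffix of $w=vu$ longer than $u$, contradicting the choice of $u$; this step correctly uses only maximality of $u$. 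In Step 2, the unioccurrence of $u$ in the rich word $w$ (which also follows directly from the characterization applied to the prefix $w$ itself, with no need for Proposition \ref{p3}) rules out any occurrence of $P_i$ lying inside $w$, since the central copy of $u$ in such an occurrence ends at position $e-i<|w|$ and is therefore genuinely distinct from the suffix occurrence (here $i\geq 1$ matters); and Step 1 applied to $p_t$ with $t=e-|w|<i$ rules out any occurrence straddling into $\widetilde{v}$, because it would be a palindromic suffix of $p_t$ longer than $\mathrm{lps}(p_t)=P_t$. The case split on $e$ is exhaustive, so $P_i$ is unioccurrent and $w^{(+)}$ is rich. Your route is essentially the standard one going back to \cite{djp} and \cite{gjwz}; what it buys relative to the paper is self-containedness, since the paper merely quotes the result. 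Two cosmetic remarks only: state the characterization for nonempty prefixes (the empty word is never unioccurrent), and note that the degenerate cases $w=\epsilon$ or $v=\epsilon$ are trivial since then $w^{(+)}=w$.
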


\begin{proposition}\label{p5}(\cite{gjwz}, Prop. 2.8)
Proper palindromic (prefix) closure preserves richness.
\end{proposition}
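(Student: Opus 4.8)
The plan is to reduce the statement to a single application of Proposition~\ref{p4}. Writing $w=vu$ with $u$ the ${\rm lpps}$, note first that $w^{(++)}=vu\widetilde v$ is itself a palindrome, since $\widetilde{vu\widetilde v}=v\widetilde u\widetilde v=vu\widetilde v$ (using $u=\widetilde u$). If $w$ is \emph{not} a palindrome, its longest palindromic suffix coincides with its longest \emph{proper} one, so $w^{(++)}=w^{(+)}$ and richness is immediate from Proposition~\ref{p4}; the prefix-closure statement then follows by applying the suffix case to $\widetilde w$ and reversing, since richness and palindromicity are invariant under reversal. Hence I may assume $w$ is a palindrome, which is the only substantive case.

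Next I would isolate the letter $a$ immediately preceding $u$ in $w$ (the last letter of $v$) and show $w^{(++)}=(wa)^{(+)}$, reducing matters to Proposition~\ref{p4} once more. The key computation is that the longest palindromic suffix of $wa=vua$ is exactly $aua$: any strictly longer palindromic suffix, after deleting its bounding letters $a$, would yield a palindromic suffix of $w$ longer than $u$, which (as $w$ is a palindrome whose only palindromic suffix longer than the ${\rm lpps}$ $u$ is $w$ itself) must equal $w$, forcing a palindromic suffix of $vua$ of length exceeding $|vua|$ --- impossible. Writing $vua=(va^{-1})(aua)$ then gives $(wa)^{(+)}=vu\widetilde v=w^{(++)}$. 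By Proposition~\ref{p4} it now suffices to prove that $wa$ is rich; and since a one-letter extension creates at most one new palindromic factor (a consequence of Proposition~\ref{p0}), this amounts to showing that the new longest palindromic suffix $aua$ is unioccurrent in $wa$, equivalently that $aua$ does \emph{not} occur in $w$.

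This last point is the heart of the argument, and I would settle it through the following lemma: the ${\rm lpps}$ $u$ of a rich palindrome $w$ occurs exactly twice in $w$. Indeed $u$ occurs as a suffix by definition and, since $w=\widetilde w=u\widetilde v$, also as a prefix. If there were a further, internal occurrence, consider the occurrence of $u$ immediately preceding the suffix occurrence: the corresponding complete return to $u$ is a palindrome by Proposition~\ref{p3}, it is a \emph{proper} suffix of $w$ (its left endpoint lies strictly past the first letter), and its length exceeds $|u|$ --- contradicting the maximality of $u$ as a proper palindromic suffix. Thus $u$ occurs only as the prefix and the suffix, so it never carries letters on both sides, and in particular $aua$ cannot occur in $w$. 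Consequently $wa$ is rich, and Proposition~\ref{p4} applied to $w^{(++)}=(wa)^{(+)}$ completes the proof. The obstacle I expect to demand the most care is precisely this exactly-twice lemma, where the interplay between Proposition~\ref{p3} and the maximality of $u$ is used decisively; the surrounding reductions are essentially bookkeeping with lengths and reversals.
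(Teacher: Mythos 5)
The paper never proves this proposition: it is imported directly from the literature (Prop.~2.8 of the cited Glen--Justin--Widmer--Zamboni paper), so there is no in-paper argument to compare yours against --- what you have produced is a self-contained proof of a result the author only cites. Your argument is correct. The reduction is sound: if $w$ is not a palindrome its longest palindromic suffix is automatically proper, so $w^{(++)}=w^{(+)}$ and Proposition~\ref{p4} applies, and the prefix version follows by reversal (consistent with the paper's definition $^{(++)}w=\widetilde{\widetilde{w}^{(++)}}$, richness being invariant under reversal since the set of palindromic factors is unchanged). In the palindromic case, your identity $w^{(++)}=(wa)^{(+)}$, with $a$ the last letter of $v$, checks out: a palindromic suffix of $wa$ longer than $aua$ would strip to a palindromic suffix of $w$ longer than $u$, hence equal to $w$ itself, giving an impossible length. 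The reduction of richness of $wa$ to unioccurrence of $aua$ is the standard characterization, and your use of Proposition~\ref{p0} (at most one new palindrome per appended letter when $w$ is rich) is legitimate. The ``exactly twice'' lemma is the genuinely nontrivial step and your proof of it is right: a third occurrence of $u$ would make the complete return to $u$ ending at the suffix occurrence a palindromic \emph{proper} suffix of $w$ (Proposition~\ref{p3}) strictly longer than $u$, contradicting maximality; and exactly-twice immediately forbids any occurrence of $aua$ in $w$. Two points you leave implicit but which are harmless: reversal-invariance of richness, and the degenerate case $u=\epsilon$, which happens only for $|w|=1$, where the lemma is vacuous but $aua\notin w$ holds for length reasons (note the paper's definition of complete return requires $u$ non-empty, so your lemma should be stated for $u\neq\epsilon$). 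Neither affects correctness.
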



\section{The length of a longest rich square-free word}

A word of form $uu$, where $u\neq\epsilon$, is called a \emph{square} and a word $w$ which does not have a square as a factor is called \emph{square-free}.
For example 1212 is a square and 01210 is square-free.

In \cite{ps}, Theorem 4 and Remark 6, it was proved that every infinite rich word contains a square.
This means that every rich square-free word is of finite length.
The length of a longest such word, on an alphabet of size $n$, is denoted with $r(n)$.
The explicit formula for $r(n)$ was left as an open problem in \cite{ps}.

The first seven exact values of $r(n)$ are $r(1)=1,r(2)=3,r(3)=7,r(4)=15,r(5)=33,r(6)=67$ and $r(7)=145$. These can be found from https://oeis.org/A269560.
The longest rich square-free word on a given alphabet is not unique.
Here are all the longest non-isomorphic ones, up to permutating the letters and taking the reversal, for $n=1,\ldots,7$:

\noindent $w_{1,1} = 1$

\noindent $w_{2,1} = 121$

\noindent $w_{3,1} = 2131213$

\noindent $w_{3,2} = 1213121$

\noindent $w_{4,1} = 131214121312141$

\noindent $w_{4,2} = 123121412131214$

\noindent $w_{4,3} = 213121343121312$

\noindent $w_{4,4} = 121312141213121$

\noindent $w_{5,1} = 421242131213531213124213121353135$

\noindent $w_{5,2} = 131242131213531213124213121353135$

\noindent $w_{6,1} = 1513121315131214121312141614121312141213151312141213121416141214161$

\noindent $w_{6,2} = 1214121315131214121312141614121312141213151312141213121416141214161$

\noindent $w_{6,3} = 4212421312135312131242131213531356531353121312421312135312131242124$

\noindent $w_{6,4} = 1312421312135312131242131213531356531353121312421312135312131242124$

\noindent $w_{6,5} = 5313531213124213121353121312421316131242131213531213124213121353135$

\noindent $w_{6,6} = 1312421312135312131242131213531356531353121312421312135312131242131$

\noindent $w_{7,1} = 242131213531213124213161312421312135312131242131213531357531353121312$

\noindent $4213121353121312421316131242131213531213124213121353135753135312135313575357$

\noindent $w_{7,2} = 242131213531213124213161312421312135312131242131213531357531353121312$

\noindent $4213121353121312421316131242131213531213124213121353135753135312135313575313$

\noindent $w_{7,3} = 242131213531213124212464212421312135312131242131213531357531353121312$

\noindent $4213121353121312421246421242131213531213124213121353135753135312135313575357$

\noindent $w_{7,4} = 242131213531213124212464212421312135312131242131213531357531353121312$

\noindent $4213121353121312421246421242131213531213124213121353135753135312135313575313$

We can see that
$$w_{2,1}=w_{1,1} 2 w_{1,1},\ w_{3,2}=w_{2,1} 3 w_{2,1},\ w_{4,3}=w_{3,1} 4 \widetilde{w_{3,1}},\ w_{4,4}=w_{3,2} 4 w_{3,2},$$
$$w_{6,3} = w_{5,1} 6 \widetilde{w_{5,1}},\ w_{6,4} = w_{5,2} 6 \widetilde{w_{5,1}},\ w_{6,5} = \widetilde{w_{5,2}} 6 w_{5,2}\ \textrm{and}\ w_{6,6} = w_{5,2} 6 \widetilde{w_{5,2}}.$$
Generally, we can construct rich square-free words by using a basic recursion
$$b_n = b a \widetilde{b},$$
where $b$ is a longest rich square-free word over an $(n-1)$-ary alphabet $A$ and $a\notin A$ is a new letter.
It is very easy to see that $b_n$ is rich and square-free. This gives us a recursive lower bound for $r(n)$: $r(n)\geq 2 r(n-1)+1$, for all $n\geq2$.
We will use this inequality excessively later in Section \ref{upper}, when we prove an upper bound for $r(n)$.
The closed-form solution for the recursion $r(1)=1,r(n)\geq 2 r(n-1)+1$ is $r(n)\geq 2^n-1$.

The case $n=5$ reveals that the basic recursion $b_n = b a \widetilde{b}$ is not always optimal,
since neither $w_{5,1}$ nor $w_{5,2}$ is of that form: $|w_{5,1}|=r(5)=33>31=2\cdot r(4)+1$.

We can also see that
$$w_{3,1}=2 w_{1,1} 3 w_{1,1} 2 w_{1,1} 3, \ w_{4,1}=1 3 w_{2,1} 4 w_{2,1} 3 w_{2,1} 4 1, \ w_{4,2}=213 w_{2,1} 4 w_{2,1} 3 w_{2,1} 4,$$
$$w_{5,1} = 42124 w_{3,1} 5 \widetilde{w_{3,1}} 4 w_{3,1} 53135, \ w_{5,2} = 13124 w_{3,1} 5 \widetilde{w_{3,1}} 4 w_{3,1} 53135,$$
$$w_{6,1} = 1513121315 w_{4,1} 6 \widetilde{w_{4,1}} 5 w_{4,1} 6141214161,\
w_{6,2} = 1214121315 w_{4,1} 6 \widetilde{w_{4,1}} 5 w_{4,1} 6141214161,$$
$$w_{7,1} = u_{1,2} 6 w_{5,2} 7 \widetilde{w_{5,2}} 6 w_{5,2} 7 v_{1,3},\
w_{7,2} = u_{1,2} 6 w_{5,2} 7 \widetilde{w_{5,2}} 6 w_{5,2} 7 v_{2,4},$$
$$w_{7,3} = u_{3,4} 6 w_{5,1} 7 \widetilde{w_{5,1}} 6 w_{5,1} 7 v_{1,3},\
w_{7,4} = u_{3,4} 6 w_{5,1} 7 \widetilde{w_{5,1}} 6 w_{5,1} 7 v_{2,4},$$
$$\textrm{where}\  u_{1,2}=2421312135312131242131, u_{3,4}=2421312135312131242124,$$
$$v_{1,3}=53135312135313575357 \ \textrm{and}\  v_{2,4}=53135312135313575313.$$
This gives us a hint how to get, in some cases, a better recursion than the basic recursion.
We will define this recursion explicitly in the next subsection.

\subsection{A lower bound}\label{lower}

In this subsection, we will prove another lower bound for $r(n)$.
We will use an alphabet $\{A_0,A_1,A_2,A_3,B_3,A_4,B_4,A_5,B_5,\ldots\}$.
The following construction of rich square-free words $w_n$ is recursive. The first six words are
$$w_1=A_1, w_2=A_0 A_2 A_0, w_3 = v_3 A_3 w_1 B_3 w_1 A_3 w_1 B_3 u_3, w_4 = v_4 A_4 w_2 B_4 w_2 A_4 w_2 B_4 u_4,$$
$$w_5 = v_5 A_5 w_3 B_5 w_3 A_5 w_3 B_5 u_5, w_6 = v_6 A_6 w_4 B_6 w_4 A_6 w_4 B_6 u_6,$$
where $v_3,u_3=\epsilon$, $v_4,u_4=A_0$, $v_5=A_5 A_3 A_1 A_3$, $u_5=B_3 A_1 A_3 A_1$, $v_6=A_0 A_6 A_0 A_4 A_0 A_2 A_0 A_4 A_0$ and $u_6=A_0 B_4 A_0 A_2 A_0 A_4 A_0 A_2 A_0$.
Notice that $w_6$ is isomorphic ($\cong$) to $w_{6,2}$, $w_5\cong w_{5,2}$, $w_4\cong w_{4,1}$ and $w_3\cong w_{3,1}$.
For $n\geq7$, we define $$w_n = v_n A_n w_{n-2} B_n \widetilde{w_{n-2}} A_n w_{n-2} B_n u_n,$$
where $v_n = (P_n c_n)^{-1} \widetilde{v_{n-4}} A_{n-2} \widetilde{v_{n-2}} A_n v_{n-2} A_{n-2} v_{n-4} A_{n-4} w_{n-6} B_{n-4} \widetilde{w_{n-6}} A_{n-4} \widetilde{v_{n-4}} A_{n-2}\widetilde{v_{n-2}}$
and $u_n = \widetilde{u_{n-2}} B_{n-2} \widetilde{w_{n-4}} A_{n-2} w_{n-4} B_{n-2} \widetilde{u_{n-4}} B_{n-4} \widetilde{w_{n-6}} (d_n \widetilde{P_n})^{-1}$,
where $P_n$ is the largest common prefix of $w_{n-6}$ and $\widetilde{v_{n-4}}$, $c_n$ is the first letter of $(P_n)^{-1}\widetilde{v_{n-4}}A_{n-2}$ and $d_n$ is the first letter of $(P_n)^{-1}w_{n-6}B_{n-4}$.

We can see that $\textrm{Alph}(w_{2k})=\{A_0,A_2,A_4,B_4,A_6,B_6,\ldots,A_{2k},B_{2k}\}$ and $\textrm{Alph}(w_{2k+1})=\{A_1,A_3,B_3,A_5,B_5,\ldots,A_{2k+1},B_{2k+1}\}$.
This means we really have $\textrm{Alph}(w_n)=n$.
We also have $c_n\neq d_n$, since $A_{n-2}\notin w_{n-6}$ and $B_{n-4}\notin \widetilde{v_{n-4}}$.

Before we prove that $w_n$ is rich and square-free, we will make some notation in order to make the proof look simpler.
We mark that $E_n = A_n w_{n-2} B_n \widetilde{w_{n-2}} A_n w_{n-2} B_n$,
$F_n = (P_n c_n)^{-1} \widetilde{v_{n-4}} A_{n-2} \widetilde{v_{n-2}}$,
$G_n = \widetilde{w_{n-6}} A_{n-4} \widetilde{v_{n-4}} A_{n-2} \widetilde{v_{n-2}}$
and $H_n = \widetilde{P_n} A_{n-4} w_{n-6} B_{n-4} G_n$.
Now $w_n = v_n E_n u_n$, $v_n = F_n A_n \widetilde{G_n} B_{n-4} G_n$ and $w_{n-2}=\widetilde{H_n} d_n \widetilde{u_n}$.
We can also see that $H_n$ is a suffix of $v_n$ and $F_n$ is a suffix of $G_n$.

\begin{proposition}\label{p2.1}
The word $w_n$ is square-free for all $n\geq1$.
\end{proposition}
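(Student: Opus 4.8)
\medskip
\noindent\textbf{Proof proposal.} The plan is to argue by strong induction on $n$. For $1\le n\le 6$ the words $w_n$ are given explicitly and are isomorphic to the words $w_{n,i}$ listed above, whose square-freeness can be verified directly; these form the base. For the inductive step I assume that $w_m$, and therefore $\widetilde{w_m}$ and every factor of either, is square-free for all $m<n$, and I suppose toward a contradiction that $w_n=v_nE_nu_n$ contains a square $XX$.

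The starting observation is that the letters $A_n$ and $B_n$ are new at level $n$ and occur in $w_n$ only finitely many times and in known places. Indeed every block $v_{n-2},v_{n-4},w_{n-6},u_{n-2},u_{n-4},w_{n-4}$ appearing inside $v_n$ and $u_n$ is written over $\textrm{Alph}(w_{n-2})$, so inspecting $v_n$, $E_n=A_nw_{n-2}B_n\widetilde{w_{n-2}}A_nw_{n-2}B_n$ and $u_n$ shows that $B_n$ occurs exactly twice and $A_n$ exactly three times. Each position of $X$ occurs twice in $XX$, so $XX$ contains an even number of occurrences of each of $A_n$ and $B_n$; hence it contains either none or exactly two of each.

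If $XX$ contains no occurrence of $A_n$ or $B_n$, then, since passing from one maximal $\{A_n,B_n\}$-free block of $w_n$ to the next crosses a marker, $XX$ is confined to a single such block. These blocks are $F_n$, $\widetilde{G_n}B_{n-4}G_n$, the two copies of $w_{n-2}$, the copy of $\widetilde{w_{n-2}}$, and $u_n$. The copies of $w_{n-2}$ and $\widetilde{w_{n-2}}$ are square-free by the induction hypothesis, and the square-freeness of the connective blocks $F_n$, $\widetilde{G_n}B_{n-4}G_n$ and $u_n$ is obtained from the overlap identities recorded before the statement --- that $H_n$ is a suffix of $v_n$, that $F_n$ is a suffix of $G_n$, and that $w_{n-2}=\widetilde{H_n}d_n\widetilde{u_n}$ --- which exhibit them, up to reversal, as factors of smaller $w_m$.

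In the remaining regime $XX$ contains exactly two equal markers, so the distance between them fixes the period $|X|$: the two $B_n$'s, and likewise the two $A_n$'s inside $E_n$, are separated by $2|w_{n-2}|+2$, while the pairings involving the $A_n$ sitting in $v_n$ give the other admissible values. In each case the period forces $X$ to match the left and right contexts of the two chosen marker occurrences against one another; these contexts differ --- for instance the first $B_n$ is followed by $\widetilde{w_{n-2}}$ whereas the second is followed by $u_n$, and the stretch of $v_n$ preceding $E_n$ is not the corresponding stretch inside $E_n$ --- so the equality demanded by the period must break at some letter. The main obstacle is exactly this last analysis together with the square-freeness of $F_n$, $G_n$ and $u_n$: squares that cross the junctions $v_nE_n$ or $E_nu_n$, or that lie inside the connective material, cannot be dismissed by the marker count alone and require the precise overlap data of the construction, in particular the definitions of $P_n$, $c_n$, $d_n$ and the fact that $c_n\neq d_n$, to guarantee that every forced comparison terminates in a genuine mismatch rather than a repetition.
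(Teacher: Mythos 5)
Your induction setup, the marker count, and the treatment of squares avoiding $A_n$ and $B_n$ all match the paper's strategy, but the decisive case --- a square containing occurrences of the markers --- is not actually proved. You correctly reduce to the situation where the period of the square equals the distance between two equal markers, but then you merely assert that ``the equality demanded by the period must break at some letter,'' and you yourself concede that this step ``cannot be dismissed by the marker count alone'' and needs the precise overlap data. That overlap analysis is the entire substance of the paper's proof: it shows that a crossing square must have one of exactly two forms, namely $xA_nw_{n-2}B_ny\,xA_nw_{n-2}B_ny$ (with $x$ a suffix of both $v_n$ and $\widetilde{w_{n-2}}$, and $y$ a prefix of both $u_n$ and $\widetilde{w_{n-2}}$) or $xA_ny\,xA_ny$, and then it derives a one-letter contradiction in each: in the first form, $yx=\widetilde{w_{n-2}}=u_nd_nH_n$ forces $d_nH_n$ to be a suffix of $v_n$, while $c_nH_n$ is a suffix of $v_n$ by construction and $c_n\neq d_n$; in the second form, $x$ would have to contain $P_n^{-1}\widetilde{v_{n-4}}A_{n-2}\widetilde{v_{n-2}}$ as a suffix while also being a suffix of the strictly shorter word $(P_nc_n)^{-1}\widetilde{v_{n-4}}A_{n-2}\widetilde{v_{n-2}}$. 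Nothing in your proposal carries out (or even sketches) either of these arguments.

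Worse, the one concrete mismatch you do point to is illusory. You say the contexts of the two $B_n$'s differ because ``the first $B_n$ is followed by $\widetilde{w_{n-2}}$ whereas the second is followed by $u_n$.'' But by construction $\widetilde{w_{n-2}}=u_nd_nH_n$, i.e.\ $u_n$ \emph{is} a prefix of $\widetilde{w_{n-2}}$, so these right contexts agree throughout the whole extent of any candidate square; the words $w_n$ are engineered precisely so that all such comparisons agree for a long stretch, and the only genuine mismatch is the single letter $c_n$ versus $d_n$, detected on the left side (among suffixes of $v_n$), not on the right. A similar imprecision occurs in your no-marker case: $\widetilde{G_n}B_{n-4}G_n$ is not, as a whole, a factor of any smaller $w_m$; one must first split it at the unioccurrent letter $B_{n-4}$ (another parity argument) and only then invoke the induction hypothesis on $G_n$, a suffix of $\widetilde{w_{n-2}}$. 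So the skeleton of your proof is sound and agrees with the paper's, but the case that constitutes the heart of the proposition is deferred rather than proved, and the heuristic offered in its place points at a mismatch that does not exist.
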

\begin{proof}
We prove the claim by induction. It is easy to check that $w_n$ is square-free when $1\leq n\leq6$.
Suppose $w_n$ is square-free for all $n<k$, where $k\geq7$. Now we need to prove that $w_k$ is square-free.

The word $A_k w_{k-2} B_k \widetilde{w_{k-2}} A_k w_{k-2} B_k u_k$ is square-free because $w_{k-2}$ is square-free, $A_k,B_k\notin w_{k-2}$ and $\widetilde{u_k}$ is a proper suffix of $w_{k-2}$.
The words $G_k$ and $F_k$ are suffixes of $\widetilde{w_{k-2}}$ and $A_k,B_{k-4}\notin G_k,F_k$, which means that $v_k = F_k A_k \widetilde{G_k} B_{k-4} G_k$ is square-free.

Now, the only way $w_k = F_k A_k \widetilde{G_k} B_{k-4} G_k A_k w_{k-2} B_k \widetilde{w_{k-2}} A_k w_{k-2} B_k u_k$ can have a square is either
1) $x A_k w_{k-2} B_k y x A_k w_{k-2} B_k y$, where $x$ is a suffix of both $v_k$ and $\widetilde{w_{k-2}}$, and $y$ is a prefix of both $u_k$ and $\widetilde{w_{k-2}}$,
or 2) $x A_k y x A_k y$, where $x$ is a suffix of both $F_k$ and $\widetilde{G_k} B_{k-4} G_k$, and  $y$ is a prefix of both $\widetilde{w_{k-2}}$ and $\widetilde{G_k} B_{k-4} G_k$.

1) Case $x A_k w_{k-2} B_k y x A_k w_{k-2} B_k y$.
Now $yx = \widetilde{w_{k-2}}=u_k d_k H_k$.
Because $y$ is a prefix of $u_k$ and $x$ is suffix of $v_k$, we have that $d_k H_k$ is a suffix of $v_k$.
We also know that $c_k H_k$ is always a suffix of $v_k$.
This is a contradiction since $c_k \neq d_k$.

2) Case $x A_k y x A_k y$.
Now $y$ is a prefix of $\widetilde{w_{k-2}}$, which means that $x$ has to have a suffix $P_k^{-1} \widetilde{v_{k-4}} A_{k-2} \widetilde{v_{k-2}}$.
This is a contradiction, since $x$ is also a suffix of $(P_k c_k)^{-1} \widetilde{v_{k-4}} A_{k-2} \widetilde{v_{k-2}}$.
\end{proof}

\begin{proposition}\label{p2.2}
The word $w_n$ is rich for all $n\geq1$.
\end{proposition}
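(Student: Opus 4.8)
The plan is to argue by induction on $n$, running the induction in parallel with Proposition \ref{p2.1} and reusing the decompositions and relations established just before it. The tool of choice is the complete-return characterization of Proposition \ref{p3}: $w_k$ is rich precisely when every complete return to every palindromic factor of $w_k$ is itself a palindrome. The words $w_n$ for $1\le n\le 6$ are explicit and finite, so richness in those cases can be verified directly; thus I assume $w_m$ is rich for all $m<k$ with $k\ge 7$ and examine an arbitrary palindrome $p$ of $w_k$ with two consecutive occurrences, aiming to show the complete return they bound is a palindrome.

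First I would use the block decomposition $w_k=v_k\,A_k\,w_{k-2}\,B_k\,\widetilde{w_{k-2}}\,A_k\,w_{k-2}\,B_k\,u_k$ together with $A_k,B_k\notin w_{k-2}$ to localise occurrences. If $p$ contains neither $A_k$ nor $B_k$, then each of its occurrences lies inside a single copy of $w_{k-2}$ or $\widetilde{w_{k-2}}$, or inside a tail; when the two consecutive occurrences lie in the same such copy, the complete return is a complete return inside a rich word and is therefore a palindrome by Proposition \ref{p3} and the inductive hypothesis. The global structure is controlled by the palindromic skeleton: both $w_{k-2}\,B_k\,\widetilde{w_{k-2}}$ and $A_k\,w_{k-2}\,B_k\,\widetilde{w_{k-2}}\,A_k$ are palindromes, so a palindrome $p$ that meets an $A_k$ or $B_k$ is forced into a position compatible with these central symmetries, and the symmetry exchanging its consecutive occurrences makes the enclosed return a palindrome.

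The delicate case is a palindrome whose two consecutive occurrences straddle a seam and reach into the engineered tails $v_k$ or $u_k$. Here I would invoke exactly the relations recorded before Proposition \ref{p2.1}: that $H_k$ is a suffix of $v_k$, that $F_k$ is a suffix of $G_k$, and, most importantly, that $c_k\ne d_k$ (which holds because $A_{k-2}\notin w_{k-6}$ and $B_{k-4}\notin\widetilde{v_{k-4}}$). These are the same facts that excluded squares at the two seams in Proposition \ref{p2.1}, and they play the analogous role for palindromes: they rule out the would-be long palindrome whose reflected copy would otherwise appear at the $v_k$--$w_{k-2}$ or $w_{k-2}$--$u_k$ junction, so that no complete return violating the palindrome condition is created.

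I expect this last step to be the main obstacle, because it is exactly where the intricate definitions of $v_k$ and $u_k$, through $P_k$, $c_k$, $d_k$ and the nested pieces $F_k$, $G_k$, $H_k$, must be used; the reductions of the previous paragraph are structurally routine once the inductive hypothesis is in place. As in the square-free argument, the combinatorial heart is confined to the two seams, and the key leverage is the single inequality $c_k\ne d_k$ together with the suffix relations among $v_k$, $G_k$ and $H_k$.
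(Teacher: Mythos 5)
Your plan is not a proof: at the decisive point you write that the relations $c_k\neq d_k$, ``$H_k$ is a suffix of $v_k$'', ``$F_k$ is a suffix of $G_k$'' will ``rule out the would-be long palindrome'' at the seams, and you yourself flag this as the main obstacle --- but you never carry out that step, and it is exactly the hard content of the proposition. The analogy with Proposition \ref{p2.1} is weaker than you suggest. A square is a single factor with rigid structure, so in the square-free proof one can enumerate the two possible straddling configurations and kill each with $c_k\neq d_k$. Richness, by Proposition \ref{p3}, is a condition on \emph{every} complete return to \emph{every} palindromic factor, and these returns are not confined to the seams: for instance, the complete returns to the letter $A_k$ in $w_k$ are $A_k \widetilde{G_k} B_{k-4} G_k A_k$ and $A_k w_{k-2} B_k \widetilde{w_{k-2}} A_k$, each spanning a large block of the word. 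Your middle paragraph, asserting that a palindrome meeting $A_k$ or $B_k$ is ``forced into a position compatible with these central symmetries'' so that ``the symmetry exchanging its consecutive occurrences makes the enclosed return a palindrome,'' is precisely the statement that needs proof; no mechanism is given for controlling where such palindromes and their consecutive occurrences can sit, and occurrences of a palindromic factor lying in \emph{different} copies of $w_{k-2}$ (with the return crossing $B_k$ or $A_k$, or reaching into $v_k$, $u_k$) are left untreated.

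The paper avoids this global case analysis entirely by a constructive route you do not use: starting from the rich word $A_k w_{k-2} B_k$, it applies palindromic closure (Proposition \ref{p4}) and proper palindromic (prefix) closure (Proposition \ref{p5}), which preserve richness, to build rich words $A_k w_{k-2} B_k \widetilde{w_{k-2}} A_k$, then $A_k w_{k-2} B_k \widetilde{w_{k-2}} A_k w_{k-2} B_k \widetilde{w_{k-2}} A_k$, then (passing to factors) $H_k E_k u_k$. The single combinatorial fact that must be checked by hand is that the palindromic prefix $PP = c_k \widetilde{P_k} A_{k-4} w_{k-6} B_{k-4} \widetilde{w_{k-6}} A_{k-4} P_k c_k$ is \emph{unioccurrent} in $c_k H_k E_k u_k$ --- this is where $c_k\neq d_k$ genuinely enters, via tracking the occurrences of $B_{k-4}$ --- whence $c_k H_k E_k u_k$ is rich and its proper palindromic prefix closure, still rich by Proposition \ref{p5}, contains $w_k$ as a suffix. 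If you want to salvage your approach, you would need to either carry out the full complete-return case analysis (substantially harder than the square case), or, more realistically, replace your seam argument by the unioccurrence claim and the closure machinery, which is the paper's proof.
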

\begin{proof}
We prove the claim by induction. It is easy to check that $w_n$ is rich when $1\leq n\leq6$.
Suppose $w_n$ is rich for all $n<k$, where $k\geq7$. Now we need to prove that $w_k$ is rich.

Since $w_{k-2}$ is rich and $A_k,B_k\notin w_{k-2}$, we get that $A_k w_{k-2} B_k$ is rich.
Proposition \ref{p4} gives now that $A_k w_{k-2} B_k \widetilde{w_{k-2}} A_k$ is rich.
The ${\rm lpps}$ of $A_k w_{k-2} B_k \widetilde{w_{k-2}} A_k$ is $A_k$, which means $A_k w_{k-2} B_k \widetilde{w_{k-2}} A_k w_{k-2} B_k \widetilde{w_{k-2}} A_k$ is rich by Proposition \ref{p5}.
The word $u_k$ is a prefix of $\widetilde{w_{k-2}}$, so the factor $A_k w_{k-2} B_k \widetilde{w_{k-2}} A_k w_{k-2} B_k u_k$ is also rich.

The ${\rm lppp}$ of $A_k w_{k-2} B_k \widetilde{w_{k-2}} A_k w_{k-2} B_k u_k$ is $A_k w_{k-2} B_k \widetilde{w_{k-2}} A_k$, which means that also
the proper palindromic prefix closure $\widetilde{u_{k}} B_k \widetilde{w_{k-2}} A_k w_{k-2} B_k \widetilde{w_{k-2}} A_k w_{k-2} B_k u_k$ is rich.
The word $H_k$ is a suffix of $\widetilde{w_{k-2}}$, which means $H_k A_k w_{k-2} B_k \widetilde{w_{k-2}} A_k w_{k-2} B_k u_k = H_k E_k u_k$ is also rich.

The word $c_k H_k E_k u_k$ has a palindromic prefix $PP = c_k \widetilde{P_k} A_{k-4} w_{k-6} B_{k-4} \widetilde{w_{k-6}} A_{k-4} P_k c_k$.
The following paragraph proves that it is unioccurrent in $c_k H_k E_k u_k$.

The letter $B_{k-4}$ occurs only once in $c_k H_k$, in the middle of our palindromic prefix $PP$. This occurrence of $B_{k-4}$ is preceded by $c_k \widetilde{P_k} A_{k-4} w_{k-6}$ and succeeded by $\widetilde{w_{k-6}} A_{k-4} P_k c_k$.
The last occurrence of $B_{k-4}$ in $E_k u_k$ is succeeded by $\widetilde{w_{k-6}} (d_k \widetilde{P_k})^{-1}$ and nothing more.
Since the word $\widetilde{w_{k-6}} (d_k \widetilde{P_k})^{-1}$ is clearly a proper prefix of $\widetilde{w_{k-6}} A_{k-4} P_k c_k$, this last occurrence of $B_{k-4}$ in $E_k u_k$ cannot occur in a factor $PP$.
All other occurrences of $B_{k-4}$ in $E_k u_k$ are preceded by $B_{k-4} \widetilde{w_{k-6}} A_{k-4} w_{k-6}$ or succeeded by $\widetilde{w_{k-6}} A_{k-4} w_{k-6} B_{k-4}$.
The word $B_{k-4} \widetilde{w_{k-6}} A_{k-4} w_{k-6}$ has a suffix $d_k \widetilde{P_k} A_{k-4} w_{k-6}$, which means that it cannot have a suffix $c_k \widetilde{P_k} A_{k-4} w_{k-6}$ because $c_k \neq d_k$.
These mean that no $B_{k-4}$ in $c_k H_k E_k u_k$ can occur in a factor $PP$, except the first one.

Since $PP$ is unioccurrent palindromic prefix in $c_k H_k E_k u_k$, we get that $c_k H_k E_k u_k$ is rich and $PP$ is the ${\rm lppp}$ of $c_k H_k E_k u_k$.
Now, all we need to do is to take the proper palindromic prefix closure of $c_k H_k E_k u_k$, which is rich by Proposition \ref{p5}.
It has a suffix $w_k$, which concludes the proof:
$$^{(++)}(c_k H_k E_k u_k) = \widetilde{u_k} B_k \widetilde{w_{k-2}} A_k w_{k-2} B_k \widetilde{w_{k-2}} A_k \widetilde{G_k} B_{k-4} G_k E_k u_k$$
$$\stackrel{*}{=} X F_k A_k \widetilde{G_k} B_{k-4} G_k E_k u_k = X v_k E_k u_k = X w_k\ (^* F_k\ \textrm{is a suffix of}\ \widetilde{w_{k-2}}).$$
\end{proof}

Now we know that $w_n$ is rich and square-free, which means $r(n)\geq |w_n|$ for all $n\geq1$.
We can compute that $|w_7|=145$, $|w_8|=291$, $|w_9|=629$ and $|w_{10}|=1255$.
Notice that $w_7=w_{7,4}$, which means our lower bound is exact when $n=7$.
The cases $r(8)$ and $r(9)$ are too large to compute the exact value.
However, by creating a partial tree of rich square-free words for $n=8$ and $9$, by leaving some branches out of it,
the longest words we could find were of length 291 and 629, respectively.
These are exactly the lengths of $|w_8|$ and $|w_9|$.
Notice that $|w_8|=291=2\cdot145+1=2|w_7|+1$, which means the basic recursion $b_n$ is as good as our recursion $w_n$ when $n=8$.
Notice also that $|w_9|=629>583=2\cdot 291+1=2|w_8|+1$ and $|w_{10}|=1255<1259=2\cdot 629+1=2|w_9|+1$,
which mean $w_n$ is better than $b_n$ when $n=9$ and $b_n$ is better than $w_n$ when $n=10$.

The previous paragraph suggests that it is reasonable to make the following conjecture.

\begin{conjecture}\label{con}
$r(n)=\textrm{max}\{|w_n|,2\cdot|w_{n-1}|+1\}$ for all $n\geq1$.
\end{conjecture}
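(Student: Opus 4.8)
\noindent\textit{Proof idea.}
The two bounds $r(n)\geq\max\{|w_n|,2|w_{n-1}|+1\}$ and $r(n)\leq\max\{|w_n|,2|w_{n-1}|+1\}$ should be handled separately, and the first is already available. By Propositions \ref{p2.1} and \ref{p2.2} the word $w_n$ is rich and square-free, so $r(n)\geq|w_n|$. Applying the basic recursion to $w_{n-1}$, the word $w_{n-1}\,a\,\widetilde{w_{n-1}}$ with $a\notin\textrm{Alph}(w_{n-1})$ is rich and square-free on $n$ letters, so $r(n)\geq 2|w_{n-1}|+1$. Together these give the lower half of the conjecture for free; all the content is in the matching upper bound.

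For the upper bound I would argue by strong induction on $n$, assuming the formula for every smaller alphabet size. Fix a longest rich square-free word $W$ on $n$ letters and pick a letter $a$ of least multiplicity in $W$. Cutting $W$ at the occurrences of $a$ produces blocks containing no $a$, each a rich square-free factor of $W$ on at most $n-1$ letters, to which the induction hypothesis applies. If $a$ occurs once, then $W=PaS$ with $a\notin PS$, and the goal is to show that richness forces enough overlap between $P$ and $S$ (their alphabets, and a palindromic relationship) that $|W|=|P|+|S|+1$ cannot beat $2|w_{n-1}|+1$, i.e. the basic recursion is optimal in this regime. If $a$ occurs twice, $W=PaQaS$, then $aQa$ is a complete return to $a$ and hence a palindrome by Proposition \ref{p3}, so $Q$ is a palindrome; propagating such return constraints through Propositions \ref{p3}, \ref{p4} and \ref{p5} is what should reconstruct the three-block core $A_n\,w_{n-2}\,B_n\,\widetilde{w_{n-2}}\,A_n\,w_{n-2}\,B_n$ of $w_n$ and thereby the bound $|w_n|$.

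I expect two genuine obstacles. First, the letter of least multiplicity need not occur only once or twice — in the extremal words several letters recur many times — so the naive cut does not land cleanly inside an $(n-1)$-ary subword; one needs a sharper invariant, for instance a separating palindromic factor or an analysis of complete returns to the unioccurrent palindromic prefix $PP$ used in the proof of Proposition \ref{p2.2}, to obtain a factorization into blocks on fewer letters while controlling exactly which letters the blocks share. Second, and more seriously, neither construction dominates the other (the word $w_n$ wins at $n=9$, the basic recursion wins at $n=10$), so the induction must rule out every other way of producing a long rich square-free word, including nesting the basic recursion on top of a longest $(n-1)$-ary word that is not $w_{n-1}$. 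This imposes an internal consistency condition on the two sequences: whenever the basic recursion dominates at level $n-1$, so that $r(n-1)=2|w_{n-2}|+1>|w_{n-1}|$, the clever recursion at level $n$ must satisfy $|w_n|\geq 2r(n-1)+1=4|w_{n-2}|+3$, for otherwise reapplying the basic recursion would already exceed the conjectured value. Establishing this growth comparison in the presence of the irregular correction factors $v_n$ and $u_n$ is where the argument becomes hard, and a finite computer-assisted verification of the base cases and of the transition ranges (as was already carried out for $n=8,9$) may well be unavoidable.
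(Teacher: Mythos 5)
This statement is Conjecture \ref{con}; the paper does not prove it, so there is no ``paper's own proof'' to compare against --- the exact value of $r(n)$ is precisely what the paper leaves open. The lower half of your proposal is correct and is exactly the paper's own justification for the formula: Propositions \ref{p2.1} and \ref{p2.2} give $r(n)\geq|w_n|$, and the basic recursion $b\mapsto ba\widetilde{b}$ applied to $w_{n-1}$ (the argument works for any rich square-free word, not only a longest one) gives $r(n)\geq 2|w_{n-1}|+1$.

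The upper half is where all the content lies, and your sketch does not close it; the gaps you flag are real, and they are the open problem itself. Concretely: cutting at a letter of least multiplicity gives no structural control --- the paper's analogous device is the \emph{left special letter}, for which Lemma \ref{l2} forces strictly decreasing alphabets among the blocks $u_2,u_3,\ldots$, and no such property holds for an arbitrary letter of least multiplicity, which may occur many times with blocks sharing all $n-1$ remaining letters. Even your cleanest case ($a$ unioccurrent, $W=PaS$) only yields $|W|\leq 2r(n-1)+1$, and in the regime where the basic recursion dominates at level $n-1$ (conjecturally $n-1=10$) this exceeds $2|w_{n-1}|+1$, so it does not reduce to the conjectured maximum without first proving $|w_n|\geq 2r(n-1)+1$ --- your ``consistency condition'' $|w_n|\geq 4|w_{n-2}|+3$, which is a necessary feature of the conjecture but not a step toward establishing it. Finally, the programme of propagating complete-return/palindrome constraints to reconstruct the core $A_n w_{n-2} B_n \widetilde{w_{n-2}} A_n w_{n-2} B_n$ is exactly what the paper carries out with the special-letter machinery (Lemma \ref{l3}, Propositions \ref{pro1} and \ref{pro2}), and after an extensive nested case analysis it tops out at $r(n)\leq 5r(n-2)+4$, i.e. $r(n)<2,237^n$, which is still far from the conjectured value; the paper itself remarks that achieving Conjecture \ref{con} ``would need a slightly different approach.'' In short: your lower bound is proved, your upper bound is a research plan rather than a proof, and the statement remains open.
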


The recursion for the length of $w_n$ might be too complex to be solved in a closed-form, but we want to get at least an estimate for it.
Let us first estimate the length of $v_n$, which will be used in Proposition \ref{pro11}.

\begin{lemma}\label{lem11}
$|v_n|\geq 3|v_{n-2}|+2|w_{n-6}|+2|v_{n-4}|+6$, for $n\geq 7$.
\end{lemma}
\begin{proof}
$$|v_n|= |(P_n c_n)^{-1} \widetilde{v_{n-4}} A_{n-2}\widetilde{v_{n-2}} A_n v_{n-2} A_{n-2} v_{n-4} A_{n-4} w_{n-6} B_{n-4} \widetilde{w_{n-6}} A_{n-4} \widetilde{v_{n-4}} A_{n-2} \widetilde{v_{n-2}}|$$
$$\geq |\widetilde{v_{n-2}} A_n v_{n-2} A_{n-2} v_{n-4} A_{n-4} w_{n-6} B_{n-4} \widetilde{w_{n-6}} A_{n-4} \widetilde{v_{n-4}} A_{n-2} \widetilde{v_{n-2}}|$$
$$\geq 3|v_{n-2}|+2|w_{n-6}|+2|v_{n-4}|+6,$$
where $|(P_n c_n)^{-1} \widetilde{v_{n-4}}A_{n-2}|\geq 0$, since $c_n$ is a letter and $P_n$ is a prefix of $\widetilde{v_{n-4}}$.
\end{proof}

\begin{proposition}\label{pro11}
$r(n)\geq|w_n|> 2,008^n$ for $n\geq 5$.
\end{proposition}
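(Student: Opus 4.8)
The plan is to split the statement into a qualitative and a quantitative part and then drive the quantitative part by a strong induction with Lemma~\ref{lem11} as its engine. By Propositions~\ref{p2.1} and~\ref{p2.2} the word $w_n$ is both square-free and rich, hence it is a rich square-free word on an $n$-letter alphabet and $r(n)\geq|w_n|$. It therefore remains only to prove $|w_n|>2{,}008^n$ for $n\geq5$. The backbone of the induction is the length identity obtained by reading off $w_n=v_nE_nu_n$ with $E_n=A_nw_{n-2}B_n\widetilde{w_{n-2}}A_nw_{n-2}B_n$, which gives $|w_n|=|v_n|+|u_n|+3|w_{n-2}|+4$ since $|E_n|=3|w_{n-2}|+4$.

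The base of the induction consists of the computed values $|w_5|=33,\ |w_6|=67,\ |w_7|=145,\ |w_8|=291,\ |w_9|=629,\ |w_{10}|=1255$; a direct check shows each exceeds the corresponding power (for instance $2{,}008^5<33$). For the inductive step I take $n\geq11$, so that every index $n-2,n-4,n-6$ occurring is at least $5$ and the hypothesis applies. I would then substitute the bound of Lemma~\ref{lem11}, $|v_n|\geq3|v_{n-2}|+2|w_{n-6}|+2|v_{n-4}|+6$, together with an analogous lower estimate for $|u_n|$ read off from its recursive definition, into the identity above, replace each length by its inductive lower bound of the shape $(\mathrm{const})\cdot2{,}008^{(\cdot)}$, and reduce the whole step to one numerical inequality: that the resulting coefficient of $2{,}008^n$ is at least $1$.

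The main obstacle is that $2{,}008$ is uncomfortably close to the true growth rate of $|w_n|$ (empirically near $2{,}07$), so the estimate carries almost no slack and every naive bound fails. Keeping only the term $3|w_{n-2}|$ gives a two-step ratio of $3$, well below $2{,}008^2$, and even retaining all the full sub-copies of the $w_m$ produced by the recursion yields a rate strictly below $2{,}008$; the missing growth is supplied precisely by the partial blocks $v_n$ and $u_n$. To capture this I would run the induction simultaneously on the three statements $|w_n|>2{,}008^n$, $|v_n|>\lambda\,2{,}008^n$ and $|u_n|>\nu\,2{,}008^n$, with $\lambda,\nu$ chosen near the limiting ratios $|v_n|/|w_n|$ and $|u_n|/|w_n|$ (numerically about $0{,}13$ and $0{,}17$). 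With such a choice the identity closes the $w$-inequality because $\lambda+\nu+3/2{,}008^2>1$, while Lemma~\ref{lem11} and the companion $u$-estimate close the $v$- and $u$-inequalities; the structural reason this works is that $2{,}008^2$ lies below the dominant eigenvalue of the linear system governing the three lengths, so a positive comparison vector exists. The two delicate endpoints are: checking enough small cases so that the ratios $|v_n|/2{,}008^n$ and $|u_n|/2{,}008^n$ have already climbed above $\lambda$ and $\nu$ (the very smallest $n$, where $|v_n|=|u_n|$ are tiny, must be absorbed into the base), and discharging the final coefficient inequality with its slim margin.
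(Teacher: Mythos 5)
Your overall strategy is sound and genuinely different from the paper's. Both arguments start from the same identity $|w_n|=|v_n|+|u_n|+3|w_{n-2}|+4$, but the paper then performs an algebraic regrouping: the pieces of $u_n$ together with the tail $A_{n-4}w_{n-6}B_{n-4}\widetilde{w_{n-6}}A_{n-4}\widetilde{v_{n-4}}A_{n-2}\widetilde{v_{n-2}}$ of $v_n$ reconstitute a fourth copy of $w_{n-2}$, giving $|w_n|\geq 4|w_{n-2}|+2|v_{n-2}|+5$; applying Lemma~\ref{lem11} twice then eliminates $v$ altogether and yields the pure-$w$ inequality $|w_n|>4|w_{n-2}|+4|w_{n-8}|+12|w_{n-10}|$, which is iterated to $|w_n|>1068\,|w_{n-10}|>2{,}008^{10}|w_{n-10}|$ for $n\geq 15$, with the base $5\leq n\leq 14$ checked directly. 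You instead keep $v_n$ and $u_n$ as tracked quantities in a three-sequence induction; this avoids the regrouping trick but requires a $u$-analogue of Lemma~\ref{lem11}, which indeed reads off the definition as $|u_n|\geq |u_{n-2}|+2|w_{n-4}|+|u_{n-4}|+3$ (using $|P_n|\leq |w_{n-6}|$, since $P_n$ is a prefix of $w_{n-6}$). Your three step-inequalities do check out numerically with $\lambda=0{,}13$, $\nu=0{,}17$: the $v$-step gives coefficient $0{,}39/2{,}008^2+0{,}26/2{,}008^4+2/2{,}008^6\approx 0{,}143>\lambda$, the $u$-step gives $\approx 0{,}176>\nu$, and the $w$-step gives $\lambda+\nu+3/2{,}008^2\approx 1{,}044>1$.

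The delicate point you flag is, however, a real repair job and not just bookkeeping. With your constants the auxiliary inequalities are \emph{false} at small indices: $|u_5|=|v_5|=4$, $|u_6|=9$, $|u_7|=22$, while $0{,}17\cdot 2{,}008^7\approx 22{,}4$ and $0{,}13\cdot 2{,}008^5\approx 4{,}2$; moreover $|u_8|=45$ clears $0{,}17\cdot 2{,}008^8\approx 44{,}9$ by less than $0{,}1$. So ``absorbing the smallest $n$ into the base'' must mean claiming the $v$- and $u$-inequalities only for $n\geq 8$, taking $\{8,9,10,11\}$ as their base window and starting the induction at $n=12$ (the step needs indices $n-4\geq 8$). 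This forces you to compute $|v_{11}|$, $|u_{11}|$, $|w_{11}|$ explicitly from the recursion (they are $392$, $438$, $2721$, all with comfortable margins, so the repaired induction does close). Note also that you cannot instead shrink the constants to rescue the small cases: if $n=5,6$ had to be base cases you would need $\lambda,\nu\leq 4/2{,}008^5\approx 0{,}12$, and then $\lambda+\nu\approx 0{,}245<1-3/2{,}008^2\approx 0{,}256$, so the $w$-step would fail. Starting the base window late is essential to your argument, and once that is made explicit, your proof is correct.
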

\begin{proof}
From our recursion of $w_n$, we get that for $n\geq 11$:
$$|w_n|=|v_n A_n w_{n-2} B_n \widetilde{w_{n-2}} A_n w_{n-2} B_n u_n| = 3|w_{n-2}|+|v_n|+|u_n|+4$$
$$ = 3|w_{n-2}|+|(P_n c_n)^{-1} \widetilde{v_{n-4}} A_{n-2} \widetilde{v_{n-2}} A_n v_{n-2} A_{n-2} v_{n-4} A_{n-4} w_{n-6} B_{n-4} \widetilde{w_{n-6}} A_{n-4} \widetilde{v_{n-4}} A_{n-2} \widetilde{v_{n-2}}|$$
$$+|\widetilde{u_{n-2}} B_{n-2} \widetilde{w_{n-4}} A_{n-2} w_{n-4} B_{n-2} \widetilde{u_{n-4}} B_{n-4} \widetilde{w_{n-6}} (d_n \widetilde{P_n})^{-1}|+4$$
$$= 3|w_{n-2}|+ |(P_n c_n)^{-1} \widetilde{v_{n-4}} A_{n-2} \widetilde{v_{n-2}} A_n v_{n-2} A_{n-2} v_{n-4}|-|d_n \widetilde{P_n}|+4$$
$$+|\widetilde{u_{n-2}} B_{n-2} \widetilde{w_{n-4}} A_{n-2} w_{n-4} B_{n-2} \widetilde{u_{n-4}} B_{n-4} \widetilde{w_{n-6}}|+|A_{n-4} w_{n-6} B_{n-4} \widetilde{w_{n-6}} A_{n-4} \widetilde{v_{n-4}} A_{n-2} \widetilde{v_{n-2}}| $$
$$= 4|w_{n-2}|+ |(P_n c_n)^{-1} \widetilde{v_{n-4}} A_{n-2} \widetilde{v_{n-2}} A_n v_{n-2} A_{n-2} v_{n-4}|-|d_n \widetilde{P_n}|+4$$
$$= 4|w_{n-2}|+2(|\widetilde{v_{n-4}}|-|P_n|)+2|v_{n-2}|+|A_{n-2} A_n A_{n-2}|-|d_n|-|c_n|+4$$
$$\geq 4|w_{n-2}|+2|v_{n-2}|+5 \geq 4|w_{n-2}|+2(3|v_{n-4}|+2|w_{n-8}|+2|v_{n-6}|+6)+5$$
$$\geq 4|w_{n-2}|+2(3(3|v_{n-6}|+2|w_{n-10}|+2|v_{n-8}|+6)+2|w_{n-8}|+2|v_{n-6}|+6)+5$$
$$> 4|w_{n-2}|+4|w_{n-8}|+12|w_{n-10}|.$$
From our recursion of $w_n$ we also know that
$|w_{10}|=1255>1164=4|w_8|$,
$|w_9|=629> 580=4|w_7|$,
$|w_8|=291> 268=4|w_6|$ and
$|w_7|=145> 132=4|w_5|$.

Now, for $n\geq 15$ we have
$$|w_n| > 4|w_{n-2}|+4|w_{n-8}|+12|w_{n-10}|> 4(4|w_{n-4}|+4|w_{n-10}|)+4|w_{n-8}|+12|w_{n-10}|$$
$$= 16|w_{n-4}|+4|w_{n-8}|+28|w_{n-10}|> 16\cdot4\cdot4\cdot4|w_{n-10}|+4\cdot4|w_{n-10}|+28|w_{n-10}|$$
$$= 1068|w_{n-10}|>2,008^{10}|w_{n-10}|.$$
We can also easily check that $|w_n| > 2,008^n$ for all $5\leq n\leq 14$.
This means we have our result
$$|w_n|> 2,008^n\ \textrm{for}\ n\geq 5.$$
\end{proof}

From the basic recursion $b_n$ alone, we get $r(n)\geq 2^n-1$.
Our new recursion gives a slightly better bound $r(n)> 2,008^n$,
which can be improved easily if we do not estimate the length of $w_n$ in Proposition \ref{pro11} that roughly.
We only mention that it can be improved at least to $2,0178^n$, but we will not do it here.


\subsection{An upper bound}\label{upper}

In this subsection, we will prove an upper bound for $r(n)$.
First, we will prove two useful lemmas.
For that, let us mention that every square-free palindrome has to be of odd length,
because palindromes of even length create a square of two letters to the middle, for example 12011021 has a square 11 in the middle.

\begin{lemma}\label{l1}
The middle letter of a rich square-free palindrome is unioccurrent.
\end{lemma}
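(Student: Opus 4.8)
The plan is to argue by contradiction, using the global palindromic symmetry of the word together with Proposition \ref{p3}. Let $p$ be a rich square-free palindrome. As already observed, a square-free palindrome has odd length, so write $|p|=2m+1$ and index $p=p[1]p[2]\cdots p[2m+1]$; then $p$ has a genuine middle letter $a=p[m+1]$. Since $p$ is a palindrome, its letters satisfy $p[m+1+i]=p[m+1-i]$, so the set of positions carrying $a$ is symmetric about $m+1$. Hence, if $a$ occurred anywhere other than the centre, it would in particular occur strictly to the left of the centre. I would assume this for contradiction and let $j<m+1$ be the \emph{rightmost} position strictly left of the centre with $p[j]=a$.

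The first key step is to exhibit a palindromic complete return to $a$ that straddles the centre. By the maximality of $j$, the factor $C=p[j..m+1]$ carries $a$ at both endpoints and nowhere in between, so $C$ is a complete return to the one-letter palindrome $a$. Proposition \ref{p3} then forces $C$ itself to be a palindrome. Writing $C=aua$ with $u=p[j+1..m]$, the palindromicity of $C$ is equivalent to $u$ being a palindrome, i.e. $u=\widetilde{u}$.

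The second key step is to reflect this $u$ across the centre of $p$ and read off a square. Applying $p[m+1+i]=p[m+1-i]$ for $1\leq i\leq m-j$, the block $p[m+2..2m+1-j]$ is the reversal of $p[j+1..m]=u$, hence equals $\widetilde{u}=u$; and taking $i=m+1-j$ gives $p[2m+2-j]=p[j]=a$. Stacking the four pieces $p[j+1..m]=u$, $p[m+1]=a$, $p[m+2..2m+1-j]=u$, $p[2m+2-j]=a$ yields
$$p[j+1..2m+2-j]=u\,a\,u\,a=(ua)^2,$$
a non-empty square inside $p$ (non-empty because $ua$ contains the letter $a$ even when $u=\epsilon$). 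This contradicts square-freeness, so $a$ cannot occur off-centre and the middle letter is unioccurrent.

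I do not anticipate a serious obstacle. The only two points that need care are: justifying that $C$ is genuinely a complete return, so that Proposition \ref{p3} is applicable — this is exactly what the choice of $j$ as the closest left occurrence supplies — and keeping the index bookkeeping in the reflection step straight, so that the two reflected copies of $u$ abut into a clean $(ua)^2$ rather than a mere overlap. Note that richness enters only once, to turn the complete return $C$ into a palindrome; everything else is forced by the symmetry of $p$.
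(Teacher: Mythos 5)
Your proof is correct and follows essentially the same route as the paper's: the paper also takes the occurrence of the middle letter $b$ consecutive to the central one, writes the word as $z_1 b z_2 b \widetilde{z_2} b \widetilde{z_1}$, invokes Proposition \ref{p3} to make the complete return $b z_2 b$ (your $C=aua$) a palindrome, and then reads off the square $b z_2 b z_2$ (your conjugate square $(ua)^2$) from the global symmetry. Your index-based bookkeeping is just a more explicit rendering of the same decomposition, so there is nothing to add.
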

\begin{proof}
Since all square-free palindromes are of odd length, there always exists the middle letter.
Then, suppose the contrary: $zb\widetilde{z}$ is rich and square-free and the letter $b$ has another occurrence inside $z$.
We can take the other occurrence of $b$ to be consecutive to the $b$ in the middle and get that $zb\widetilde{z}=z_1bz_2bz_2b\widetilde{z_1}$, where $z_2$ is a palindrome because of Proposition \ref{p3}.
We get a contradiction because $bz_2bz_2$ is a square.
\end{proof}

\begin{lemma}\label{l2}
Suppose $w=u_1 a_1 u_2 a_1 \cdots a_1 u_{k-1} a_1 u_k\in\{a_1,a_2,\ldots,a_n\}^*$ is rich and square-free,
where $n,k\geq3$ (possibly $u_k=\epsilon$), $\textrm{Alph}(u_1)=\{a_2,\ldots,a_n\}$ and $\forall i: a_1\notin \textrm{Alph}(u_i)$.
$$For\ 2\leq i\leq k-1:\ \textrm{Alph}(u_{i+1})\subseteq\textrm{Alph}(u_i)\setminus\{a_i\},\ where\ u_i=v_i a_i \widetilde{v_i}.$$
\end{lemma}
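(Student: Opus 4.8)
Looking at this statement carefully, I need to understand what Lemma \ref{l2} is claiming.

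We have a rich square-free word $w = u_1 a_1 u_2 a_1 \cdots a_1 u_{k-1} a_1 u_k$ where $a_1$ separates the blocks $u_i$, none of which contains $a_1$. The first block $u_1$ uses all other letters $\{a_2, \ldots, a_n\}$. The claim is that for $2 \leq i \leq k-1$, each $u_i$ is a palindrome of the form $v_i a_i \widetilde{v_i}$ (so $a_i$ is its middle letter), and the alphabet strictly shrinks: $\mathrm{Alph}(u_{i+1}) \subseteq \mathrm{Alph}(u_i) \setminus \{a_i\}$.

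Let me think about why this would be true...

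The key structural fact is Proposition \ref{p3}: in a rich word, every complete return to a palindrome is itself a palindrome. Consider the single letter $a_1$, which is a palindrome. Between two consecutive occurrences of $a_1$, we have the complete return $a_1 u_i a_1$, which must therefore be a palindrome — forcing each $u_i$ (for the interior blocks) to be a palindrome. Since $w$ is square-free, each such palindrome has odd length, so it has a genuine middle letter. Writing $u_i = v_i a_i \widetilde{v_i}$ just names that middle letter $a_i$. By Lemma \ref{l1}, since $u_i a_1$-stuff is rich and square-free... actually I'd apply Lemma \ref{l1} to conclude the middle letter $a_i$ is unioccurrent in $u_i$, which is the seed of the shrinking-alphabet argument.

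Here is my proof proposal.

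=== PROOF PROPOSAL (LaTeX) ===

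\begin{proof}
The plan is to exploit richness through Proposition \ref{p3} applied to the single-letter palindrome $a_1$, combined with a square-freeness counting argument to force the alphabet to shrink by exactly the middle letter at each step.

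First I would establish the palindromic structure of the interior blocks. For $2\le i\le k-1$, the factor $a_1 u_i a_1$ is a complete return to the palindrome $a_1$, since $a_1\notin\textrm{Alph}(u_i)$ guarantees these are consecutive occurrences. By Proposition \ref{p3}, $a_1 u_i a_1$ is a palindrome, hence $u_i$ is a palindrome. As $w$ is square-free, $u_i$ is a square-free palindrome and therefore of odd length, so it has a genuine middle letter; writing $u_i = v_i a_i \widetilde{v_i}$ simply names this middle letter $a_i$. Lemma \ref{l1} then gives that $a_i$ is unioccurrent in $u_i$, so in particular $a_i\notin\textrm{Alph}(v_i)$ and $a_i\notin\textrm{Alph}(u_{i+1})$ once I show the inclusion.

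Next I would prove the inclusion $\textrm{Alph}(u_{i+1})\subseteq\textrm{Alph}(u_i)$. Suppose toward a contradiction some letter $b\in\textrm{Alph}(u_{i+1})$ with $b\notin\textrm{Alph}(u_i)$; note $b\ne a_1$. The idea is to track the block structure leftward using palindromicity: richness forces strong constraints on where a ``new'' letter can first appear, and I would argue by a minimal-counterexample or left-to-right induction on $i$ that introducing $b$ in $u_{i+1}$ without it occurring in $u_i$ would create a complete return to $b$ (or to a longer palindrome centered appropriately) that fails to be a palindrome, or alternatively forces a square. The cleanest route is probably to use that $a_1 u_i a_1 u_{i+1} a_1$ is rich and square-free and to apply Proposition \ref{p3} to a suitably chosen palindromic factor whose complete return would have to contain $b$; the symmetry $u_i=v_i a_i\widetilde{v_i}$ is what makes the return structure rigid. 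The strict exclusion of $a_i$, i.e. $a_i\notin\textrm{Alph}(u_{i+1})$, I would get by observing that if $a_i$ reappeared in $u_{i+1}$ then, combined with the central $a_i$ of $u_i=v_i a_i\widetilde{v_i}$ and the reflective palindromic constraints, one manufactures a repeated block violating square-freeness, much as in the proof of Lemma \ref{l1}.

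The main obstacle I expect is the inclusion step $\textrm{Alph}(u_{i+1})\subseteq\textrm{Alph}(u_i)$: the palindromicity of the individual blocks $u_i$ (from Proposition \ref{p3}) and the unioccurrence of middle letters (from Lemma \ref{l1}) are both immediate, but ruling out the appearance of a genuinely new letter in $u_{i+1}$ requires carefully choosing the right palindromic factor and showing its complete return is forced, which is delicate because the blocks can nest. I would set this up as an induction where the hypothesis $\textrm{Alph}(u_i)\subseteq\{a_i,a_{i+1},\ldots\}$ (a nested decreasing chain) is maintained, using at each stage that the newly-excluded middle letter $a_i$ is unioccurrent and that any letter of $u_{i+1}$ must already have been ``seen'' to the left by the palindromic reflection, so that a first-time-appearing letter would have no mirror partner and break the palindrome-return property guaranteed by richness.
\end{proof}
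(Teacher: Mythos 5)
Your setup is correct and matches the paper's: Proposition \ref{p3} applied to complete returns to $a_1$ makes each interior block $u_i$ a palindrome, square-freeness makes it of odd length, and Lemma \ref{l1} makes its middle letter $a_i$ unioccurrent in $u_i$. But that is the easy part, and everything after it in your proposal is a statement of intent rather than a proof: both halves of the actual claim --- that $a_i\notin\textrm{Alph}(u_{i+1})$ and that no letter of $u_{i+1}$ lies outside $\textrm{Alph}(u_i)$ --- are left at the level of ``I would argue\dots'' and ``the cleanest route is probably\dots'', and you concede yourself that this step is the obstacle. There is also a small non sequitur: unioccurrence of $a_i$ in $u_i$ together with the inclusion $\textrm{Alph}(u_{i+1})\subseteq\textrm{Alph}(u_i)$ would not give $a_i\notin\textrm{Alph}(u_{i+1})$, since $a_i\in\textrm{Alph}(u_i)$; the exclusion needs its own argument, which you acknowledge later but never supply.

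The missing idea is the two-sided pincer that the paper runs as an induction on $i$, and it hinges on the hypothesis $\textrm{Alph}(u_1)=\{a_2,\ldots,a_n\}$, which your sketch never uses: any offending letter already occurs to the \emph{left} of the block where it reappears. Concretely, in the base case, since $a_2\in\textrm{Alph}(u_1)$, the complete return from the last $a_2$ in $u_1$ to the central $a_2$ of $u_2$ forces $u_1=v_1a_2\widetilde{v_2}$; then if $a_2\in\textrm{Alph}(u_3)$, the return from the center of $u_2$ to the first $a_2$ of $u_3$ forces $u_3=v_2a_2v_3'$, so that $u_1a_1u_2a_1u_3=v_1\,(a_2\widetilde{v_2}a_1v_2)^2\,a_2v_3'$ exhibits a square; and for a genuinely new letter $b\in\textrm{Alph}(u_3)\setminus\textrm{Alph}(u_2)$, the palindromic return between its last occurrence in $v_1$ and its first occurrence in $u_3$ contains the same square. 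The induction step repeats this pincer with two further devices your sketch lacks: the forced shape $u_i=v_{i+1}a_{i+1}xa_i\widetilde{x}a_{i+1}\widetilde{v_{i+1}}$ coming from the induction hypothesis, and the choice of $c\in\textrm{Alph}(u_{i+2})\setminus\textrm{Alph}(u_{i+1})$ with the \emph{rightmost} occurrence in $u_1a_1\cdots a_1u_i$, followed by a case split on whether $c\in\textrm{Alph}(u_i)$; in each case one locates the middle letter of the resulting palindromic return and contradicts Lemma \ref{l1} or produces $c\in\textrm{Alph}(u_{i+1})$. Note that the mechanism is not that some return ``fails to be a palindrome'' (one of the alternatives you float): richness guarantees the return \emph{is} a palindrome, and it is precisely that palindromicity which manufactures the square or the non-unioccurrent middle letter. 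Without the leftward occurrence guaranteed by $\textrm{Alph}(u_1)$, the forced block shapes, and the rightmost-occurrence choice, the contradiction cannot be assembled, so the proposal as written does not prove the lemma.
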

\begin{proof}
Since $\forall i: a_1\notin\textrm{Alph}(u_i)$, we get from Proposition \ref{p3} that $u_2,\ldots,u_{k-1}$ are palindromes, and because $w$ is square-free, they are of odd length and non-empty.
By permutating the letters, we can suppose for $2\leq i\leq k-1$: $a_i$ is the middle letter of $u_i=v_i a_i\widetilde{v_i}$, where $a_i\notin\textrm{Alph}(v_i)$ by Lemma \ref{l1}.

We will prove the claim by induction on $i$.

1) The base case $i=2$.
Since $a_2\in\textrm{Alph}(u_1)=\{a_2,\ldots,a_n\}$, we get from Proposition \ref{p3} that $u_1=v_1 a_2 \widetilde{v_2}$.
If $a_2\in\textrm{Alph}(u_3)$ then, by Proposition \ref{p3}, we have $u_3=v_2 a_2 v'_3$, which creates a square $(a_2 \widetilde{v_2} a_1 v_2)^2$ in $u_1 a_1 u_2 a_1 u_3=v_1 a_2 \widetilde{v_2} a_1 v_2 a_2 \widetilde{v_2} a_1 v_2 a_2 v'_3$.
This means $a_2\notin\textrm{Alph}(u_3)$.

Suppose then that $b\in\textrm{Alph}(u_3)\setminus \textrm{Alph}(u_2)$, which implies $b\in\textrm{Alph}(v_1)$.
The word between the first occurrence of $b$ in $u_3$ and the last occurrence of $b$ in $v_1$ is a palindrome by Proposition \ref{p3}:
$u_1 a_1 u_2 a_1 u_3=t_1 b t_2 a_2 \widetilde{v_2} a_1 v_2 a_2\widetilde{v_2} a_1 v_2 a_2 \widetilde{t_2} b t_3$, where $v_1=t_1 b t_2$ and $u_3=v_2 a_2 \widetilde{t_2} b t_3$.
We get a contradiction since we have a square $(a_2 \widetilde{v_2} a_1 v_2)^2$.
This means $\textrm{Alph}(u_3)\subseteq\textrm{Alph}(u_2)\setminus\{a_2\}$.

2) The induction hypothesis. We can now suppose $k\geq4$, since the base case proves our claim if $k=3$.
Suppose then that for every $j$, where $2\leq j\leq i< k-1$, we have: $\textrm{Alph}(u_{j+1})\subseteq\textrm{Alph}(u_j)\setminus\{a_j\}$.

3) The induction step. Now we need to prove that $\textrm{Alph}(u_{i+2})\subseteq\textrm{Alph}(u_{i+1})\setminus\{a_{i+1}\}$.
From the induction hypothesis we get that $a_{i+1}\in\textrm{Alph}(u_{i+1})\subseteq\textrm{Alph}(u_i)\setminus\{a_i\}$,
which means $u_i=v_{i+1} a_{i+1} x a_i \widetilde{x} a_{i+1} \widetilde{v_{i+1}}$ by Proposition \ref{p3}.
If $a_{i+1}\in\textrm{Alph}(u_{i+2})$ then, by Proposition \ref{p3}, we have $u_{i+2}=v_{i+1} a_{i+1} y$,
which creates a square $(a_{i+1} \widetilde{v_{i+1}} a_1 v_{i+1})^2$ inside
$u_i a_1 u_{i+1} a_1 u_{i+2}=v_{i+1} a_{i+1} x a_i \widetilde{x} a_{i+1} \widetilde{v_{i+1}} a_1 v_{i+1}a_{i+1}\widetilde{v_{i+1}} a_1 v_{i+1} a_{i+1} y$.
This means $a_{i+1}\notin\textrm{Alph}(u_{i+2})$

Suppose then that $c\in\textrm{Alph}(u_{i+2})\setminus\textrm{Alph}(u_{i+1})$, which implies $c\in\textrm{Alph}(u_1a_1\ldots a_1 u_i)$.
Without loss of generality, we can assume that $c$ is the letter from $\textrm{Alph}(u_{i+2})\setminus\textrm{Alph}(u_{i+1})$ that has the rightmost occurrence in $u_1a_1\ldots a_1 u_i$.
The word between the leftmost occurrence of $c$ in $u_{i+2}=zcz'$ and the rightmost occurrence of $c$ in $u_1a_1\ldots a_1u_i$ has to be a palindrome by Proposition \ref{p3}.
We divide this into two cases.

- Suppose $c\notin\textrm{Alph}(u_i)$. Now $c\widetilde{z}a_1u_{i+1} P u_{i+1}a_1zc$ is a palindrome, where $\textrm{Alph}(P)\subseteq\textrm{Alph}(a_1 u_{i+1})$ because of the way we chose $c$.
Now the middle letter of the palindrome $a_1u_{i+1} P u_{i+1}a_1$ belongs to $P$ and therefore has other occurrences inside it, in $a_1u_{i+1}$ and in $u_{i+1}a_1$.
This is a contradiction by Lemma \ref{l1}. 

- Suppose $c\in\textrm{Alph}(u_i)$. Now $c\widetilde{z}a_1 v_{i+1} a_{i+1}\widetilde{v_{i+1}}a_1zc$ is a palindrome, where $a_{i+1}$ is its middle letter and $c\widetilde{z}$ is a suffix of $u_i$.
If $a_{i+1}\in\textrm{Alph}(z)$ then it is not unioccurrent in the palindrome $\widetilde{z}a_1 v_{i+1} a_{i+1}\widetilde{v_{i+1}} a_1z$ and we get a contradiction by Lemma \ref{l1}.
Since $a_{i+1}\in\textrm{Alph}(u_i)$ by the induction hypothesis, we can take the rightmost occurrence of it in $u_i$ and get that $a_{i+1} v'_i c \widetilde{z} a_1 v_{i+1} a_{i+1}$ is a palindrome,
where $v'_i c \widetilde{z}=\widetilde{v_{i+1}}$. We get a contradiction since this would mean $c\in\textrm{Alph}(v_{i+1})\subset\textrm{Alph}(u_{i+1})$.

Both cases yield a contradiction, which means $\textrm{Alph}(u_{i+2})\subseteq\textrm{Alph}(u_{i+1})\setminus\{a_{i+1}\}$.
\end{proof}

\begin{corollary}\label{cor1}
All rich square-free words are finite.
\end{corollary}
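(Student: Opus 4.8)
The plan is to argue by contradiction: I would suppose that some infinite rich square-free word $w$ exists and then extract from it a finite factor violating the alphabet-shrinking phenomenon captured by Lemma~\ref{l2}. The whole argument rests on arranging a factor of $w$ so that the two hypotheses of Lemma~\ref{l2} hold at once, and then reading off the contradiction from its conclusion.

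First I would pass to a suitable suffix. Since $w$ is over a finite alphabet, only finitely many letters occur infinitely often; let $B$ be this set and let $N$ be a position after which every occurring letter lies in $B$. The suffix $w'$ of $w$ beginning at position $N$ is again infinite, and it is rich and square-free because all of its factors are factors of $w$. Moreover every letter of $B$ occurs infinitely often in $w'$. Writing $m=|B|$, I would dispose of the cases $m\leq 2$ directly: there is no infinite square-free word on one letter (it contains $aa$) nor on two letters (every binary word of length $\geq 4$ contains a square). Hence $m\geq 3$ and Lemma~\ref{l2} becomes applicable.

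Next I would choose the separating letter carefully, and this is the delicate point, since the letter must simultaneously guarantee the full alphabet of the first block and occur infinitely often. Let $a_1$ be the letter of $B$ whose first occurrence in $w'$ is latest; the first occurrences of distinct letters lie at distinct positions, so this letter is well defined. Then every other letter of $B$ already appears before the first $a_1$, so the prefix $u_1$ of $w'$ preceding that first $a_1$ satisfies $\textrm{Alph}(u_1)=\{a_2,\ldots,a_m\}$, exactly what Lemma~\ref{l2} requires; and crucially $a_1$ still occurs infinitely often because $a_1\in B$. Cutting $w'$ at consecutive occurrences of $a_1$ then yields, for every $k$, a factor $u_1 a_1 u_2 a_1\cdots a_1 u_{k-1} a_1 u_k$ with $a_1\notin\textrm{Alph}(u_i)$ for all $i$; being a factor of $w'$, it is rich and square-free.

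Finally I would extract the contradiction from the conclusion of Lemma~\ref{l2}. For $2\leq i\leq k-1$ each $u_i$ is a non-empty palindrome $v_i a_i\widetilde{v_i}$, so $a_i\in\textrm{Alph}(u_i)$ and the inclusion $\textrm{Alph}(u_{i+1})\subseteq\textrm{Alph}(u_i)\setminus\{a_i\}$ is strict. Hence $\textrm{Alph}(u_2)\supset\textrm{Alph}(u_3)\supset\cdots\supset\textrm{Alph}(u_{k-1})$ is a strictly decreasing chain of non-empty subsets of $\{a_2,\ldots,a_m\}$, which forces $k-2\leq m-1$, that is $k\leq m+1$. But $a_1$ occurs infinitely often, so $k$ may be taken arbitrarily large, for instance $k=m+2$, a contradiction. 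Therefore no infinite rich square-free word exists, and every rich square-free word is finite. The main obstacle is not the final counting step but the setup of the third paragraph: securing an infinitely occurring separator whose first appearance is nonetheless preceded by all remaining letters, which the suffix reduction together with the ``latest first occurrence'' choice resolves.
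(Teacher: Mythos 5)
Your proof is correct, and it takes a genuinely different route from the paper's, even though both hinge on Lemma~\ref{l2} applied to the same decomposition with the same separator (the letter whose first occurrence comes last, i.e.\ the paper's left special letter). The paper argues by induction on the alphabet size: Lemma~\ref{l2} shows the decomposition $w=u_1a_1u_2a_1u_3a_1\ldots$ has finitely many blocks (at most $n$), and each block, being rich, square-free, and over at most $n-1$ letters, is finite by the induction hypothesis. You instead argue by contradiction, and your extra ingredient is the reduction to a suffix in which every occurring letter occurs infinitely often; this guarantees the separator $a_1$ recurs forever, so you can extract factors with arbitrarily many blocks, while the strict alphabet-shrinking of Lemma~\ref{l2} caps the number of blocks at $m+1$. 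This suffix trick plays the role that passing to a recurrent point in the shift orbit closure plays in Pelantov\'a and Starosta's argument (recalled in the paper's introduction), but in a more elementary form, and it lets you dispense with induction entirely: the only base facts you need are that square-free words over one or two letters have length at most $3$, and you also avoid the mild technicality of applying the finite-word Lemma~\ref{l2} to an infinite decomposition, since you only ever invoke it on finite factors. What the paper's inductive structure buys in exchange is quantitative information: it immediately yields the bound $r(n)\leq r(n-1)+1+\sum_{i=1}^{n-1}(r(n-i)+1)$ that the paper exploits right after the corollary, whereas your argument, being a pure contradiction on infinite words, proves finiteness without bounding $r(n)$.
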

\begin{proof}
We prove this by induction.
Suppose $w$ is rich and square-free word for which $|\textrm{Alph}(w)|=n\geq4$.
Suppose that all rich square-free words on an alphabet of size $n-1$ or smaller are finite.
Cases $n=1,2,3$ are trivial.

Suppose that $a_1$ is the letter of $w$ for which $w=u_1 a_1 w'$, where $\textrm{Alph}(u_1)=\textrm{Alph}(w)\setminus\{a_1\}$.
We partition $w$ such that $w=u_1 a_1 u_2 a_1 u_3 a_1 u_4 a_1\ldots$, where $a_1\notin\textrm{Alph}(u_i)$ for all $i$.
From Lemma \ref{l2} we now get that $|\textrm{Alph}(u_i)|>|\textrm{Alph}(u_{i+1})|$ for all $i\geq2$.
This means there are finitely many words $u_i$, at most $n$, and they are all over an alphabet of size $n-1$ or smaller, which concludes the proof.
\end{proof}

The above corollary gives another proof for the result mentioned in Remark 6 of \cite{ps}.
The proof of the above corollary also gives us a way to get an upper bound for $r(n)$: $r(n)\leq r(n-1)+1+\sum^{n-1}_{i=1}{(r(n-i)+1)}$.
This bound can be easily improved if we examine the word also from the right side,
i.e. we suppose that $a_1$ is the letter of $w$ for which $w=w' a_1 u_1$, where $\textrm{Alph}(u_1)=\textrm{Alph}(w)\setminus\{a_1\}$.
This notice makes it reasonable to make the following definition.

\begin{definition}
Let $w=uav$ be a word, where $a$ is a letter. If $\textrm{Alph}(u)=\textrm{Alph}(w)\setminus\{a\}$ then the leftmost occurrence of the letter $a$ in $w$ is called the \emph{left special letter} of $w$.
If $\textrm{Alph}(v)=\textrm{Alph}(w)\setminus\{a\}$ then the rightmost occurrence of the letter $a$ in $w$ is called the \emph{right special letter} of $w$.
\end{definition}

In Subsection \ref{lower}, where we constructed the words $w_n$ for our lower bound, the rightmost occurrence of $A_n$ is always the right special letter of $w_n$ and the leftmost occurrence of $B_n$ is always the left special letter of $w_n$, for $n\geq3$.
In Lemma \ref{l2} and Corollary \ref{cor1}, the first occurrence of letter $a_1$ is the left special letter of $w$.

Before we go to our upper bound for $r(n)$, we will state a helpful lemma.

\begin{lemma}\label{l3}
Suppose $w_n=x B_n y A_n z$ is a rich square-free $n$-ary word, where $n\geq3$ and the letters $A_n$ and $B_n$ are the right and left special letters of $w_n$, respectively.
Now $\textrm{Alph}(y)=\textrm{Alph}(w_n)\setminus \{A_n,B_n\}$ and $A_n\neq B_n$.
\end{lemma}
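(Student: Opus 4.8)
The plan is to read $w_n$ through the two factorizations coming from its special letters and to feed each into Lemma~\ref{l2}. Let the marked $B_n$ and $A_n$ occur at positions $p<q$; by the definition of the special letters, $\textrm{Alph}(x)=\textrm{Alph}(w_n)\setminus\{B_n\}$ and $\textrm{Alph}(z)=\textrm{Alph}(w_n)\setminus\{A_n\}$, the marked $B_n$ is the leftmost occurrence of the letter $B_n$, and the marked $A_n$ is the rightmost occurrence of the letter $A_n$. Throughout I would lean on reversal symmetry: $\widetilde{w_n}=\widetilde{z}\,A_n\,\widetilde{y}\,B_n\,\widetilde{x}$ is again rich and square-free, with $A_n$ as its left and $B_n$ as its right special letter, so any statement about the left special letter and the central segment yields at once its mirror for $A_n$. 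This halves the work.

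First I would settle $A_n\neq B_n$. If $A_n=B_n=a$, then $a$ is both left and right special, so in the decomposition $w_n=u_1 a u_2 a\cdots a u_{K+1}$ into maximal $a$-free blocks (with $K\ge 2$, the two marked occurrences being distinct) both end blocks are full: $\textrm{Alph}(u_1)=\textrm{Alph}(u_{K+1})=\textrm{Alph}(w_n)\setminus\{a\}$. But Lemma~\ref{l2} forces the strict chain $\textrm{Alph}(u_2)\supsetneq\cdots\supsetneq\textrm{Alph}(u_{K+1})$, whence $|\textrm{Alph}(u_{K+1})|\le(n-1)-(K-1)<n-1$, a contradiction.

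Next I would prove $B_n\notin\textrm{Alph}(y)$; the companion $A_n\notin\textrm{Alph}(y)$ is then free by reversal, giving $\textrm{Alph}(y)\subseteq\textrm{Alph}(w_n)\setminus\{A_n,B_n\}$. Decompose $w_n=u_1 B_n u_2 B_n\cdots B_n u_{M+1}$ with $u_1=x$; since $B_n\in\textrm{Alph}(z)$ we have $M\ge 2$ and Lemma~\ref{l2} applies. Were $B_n$ in $y$, the second $B_n$ would fall left of $q$, so the rightmost $A_n$ would sit in a block $u_j$ with $j\ge 3$. As every later block has alphabet inside $\textrm{Alph}(u_j)$, one gets $\textrm{Alph}(z)\subseteq\textrm{Alph}(u_j)\cup\{B_n\}$; with $A_n\in\textrm{Alph}(u_j)$ and $\textrm{Alph}(z)=\textrm{Alph}(w_n)\setminus\{A_n\}$ this forces $\textrm{Alph}(u_j)=\textrm{Alph}(w_n)\setminus\{B_n\}$, contradicting the cap $|\textrm{Alph}(u_j)|\le(n-1)-(j-2)\le n-2$ from Lemma~\ref{l2}.

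For the reverse inclusion, $B_n\notin\textrm{Alph}(y)$ places the second $B_n$ to the right of $q$, so $A_n$ lies in the block $u_2$, which Lemma~\ref{l2} makes a palindrome, and the same count now gives $\textrm{Alph}(u_2)=\textrm{Alph}(w_n)\setminus\{B_n\}$. Since $A_n\notin\textrm{Alph}(y)$ and $A_n$ cannot recur in the part of $u_2$ past $q$ (that part lies in $z$), the letter $A_n$ is unioccurrent in $u_2$ and hence is its central letter, so $u_2=y\,A_n\,\widetilde{y}$ and $\textrm{Alph}(y)=\textrm{Alph}(u_2)\setminus\{A_n\}=\textrm{Alph}(w_n)\setminus\{A_n,B_n\}$. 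The step I expect to be most delicate is the alphabet count of the third paragraph: correctly locating the block $u_j$ and checking the edge cases (the second $B_n$ or the rightmost $A_n$ landing in the first or the last block) so that the strict shrinking of Lemma~\ref{l2} genuinely beats $n-1$; the rest follows from reversal symmetry and the elementary fact that a once-occurring letter in a palindrome sits at its centre.
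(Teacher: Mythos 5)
Your proof is correct, and while it turns on the same key tool as the paper's --- Lemma~\ref{l2} applied to the factorization of $w_n$ into maximal blocks between consecutive occurrences of a special letter --- the execution is genuinely different. The paper proves $A_n,B_n\notin\textrm{Alph}(y)$ first, by tracking a single letter: the centre $c$ of the complete return to the marked $B_n$ (obtained from Proposition~\ref{p3}) must lie in $z$ by right specialness yet cannot by Lemma~\ref{l2}; it then gets $A_n\neq B_n$ from the middle letter of the palindrome $y$; and its proof of the inclusion $\textrm{Alph}(w_n)\setminus\{A_n,B_n\}\subseteq\textrm{Alph}(y)$ is the longest part, locating the middle letter of a palindromic return stretching from $x$ to $z$ and then analysing a second return inside $z$. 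You instead prove $A_n\neq B_n$ first and independently, and you derive both it and $B_n\notin\textrm{Alph}(y)$ from one uniform mechanism, namely cardinality bookkeeping along the strictly decreasing chain of block alphabets $\textrm{Alph}(u_2)\supset\textrm{Alph}(u_3)\supset\cdots$ given by Lemma~\ref{l2}: specialness forces some block $u_j$ with $j\geq3$ to carry $n-1$ letters, while the chain caps it at $(n-1)-(j-2)\leq n-2$; the mirror statements then come free from reversal symmetry, which the paper also uses tacitly. The largest saving is in the reverse inclusion: observing that the return block $u_2$ containing the marked $A_n$ is a palindrome in which $A_n$ occurs exactly once (by $A_n\notin\textrm{Alph}(y)$ and right specialness), hence centrally, gives $u_2=yA_n\widetilde{y}$ at once, and the count $\textrm{Alph}(u_2)=\textrm{Alph}(w_n)\setminus\{B_n\}$ finishes the proof; as a bonus this hands you the structural identity $w_n=xB_nyA_n\widetilde{y}B_n\cdots$, which the paper only re-derives separately at the start of the proof of Proposition~\ref{pro2}. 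What the paper's letter-tracking buys is contradictions witnessed by explicit letters and squares; what your route buys is brevity and uniformity, and the one delicate point you flagged --- where the marked $A_n$ can land, including in the final block $u_{M+1}$ --- is genuinely covered, since your chain bound is valid for every $3\leq j\leq M+1$.
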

\begin{proof}
First we prove that $A_n,B_n\notin y$. Suppose to the contrary that $B_n \in y$ (case $A_n \in y$ is symmetric).
We can take the leftmost occurrence of $B_n$ in $y$ and get that $w_n = x B_n y_1 c \widetilde{y_1} B_n y_2 A_n z$, where $B_n \notin y_1 c \widetilde{y_1}$ and $c$ is a letter.
Since $A_n$ is the right special letter of $w_n$, we have that $c\in z$.
Since $B_n$ is the left special letter of $w_n$, we get from Lemma \ref{l2} that $c\notin y_2 A_n z$, i.e. $c\notin z$. This is a contradiction.

Then we prove that $A_n\neq B_n$. Suppose to the contrary that $A_n=B_n$.
Now, since $A_n,B_n\notin y$, we get from Proposition \ref{p3} that $y$ is a palindrome.
From Lemma \ref{l2} we get that the middle letter of $y$ cannot be in $x$ nor in $z$.
This is a contradiction, since $x$ and $z$ has to contain all the letters except $A_n$.

Then we prove that if $a\in \textrm{Alph}(w_n)\setminus \{A_n,B_n\}$ then $a\in y$.
Suppose to the contrary that $a\in \textrm{Alph}(w_n)\setminus (\{A_n,B_n\}\cup\textrm{Alph}(y))$.
Since $A_n$ and $B_n$ are the right and left special letters, we have that $a\in x,z$.
If we take the leftmost occurrence of $a$ in $z$ and the rightmost occurrence of $a$ in $x$, then we get from Proposition \ref{p3} that $w= x'a u B_n y A_n v a z'$, where $a u B_n y A_n v a$ is a palindrome, $x=x'a u$ and $z=v a z'$.
The middle letter of the palindrome $a u B_n y A_n v a$ cannot be inside $u$ nor $v$, since it would mean $B_n\in u$ or $A_n\in v$, which is impossible since $A_n$ and $B_n$ are special letters.
The middle letter of $a u B_n y A_n v a$ cannot be inside $y$ neither, since that would mean $B_n\in y A_n$ or $A_n\in B_n y$, which we proved above to be impossible.
The only possibility is that the middle letter of $a u B_n y A_n v a$ is either $A_n$ or $B_n$. Since these cases are symmetric, we can suppose $B_n$ is the middle letter.
This means $w= x'a \widetilde{v} A_n \widetilde{y} B_n y A_n v a z'$. Since $B_n$ is the left special letter of $w$, we have $B_n\in z=v a z'$ and $B_n\notin v$.
This means $B_n\in z'$. If we take the leftmost occurrence of $B_n$ in $z'$, we get $w= x'a \widetilde{v} A_n \widetilde{y} B_n y A_n v a v' B_n z''$, where $B_n y A_n v a v' B_n$ is a palindrome which has $A_n$ as the middle letter.
This means $\widetilde{y}=v a v' $ and hence $a\in y$, which is a contradiction.
\end{proof}

There are only three cases how the right and left special letters can appear inside a word, with respect to each other.
If $w_n$ is a rich square-free $n$-ary word which has $A_n$ and $B_n$ as the right and left special letters, respectively,
then one the following cases must hold (the visible occurrences of $A_n$ and $B_n$ in $w_n$ are the special letters):

1) $w_n = x B_n y A_n z$. Now $A_n\neq B_n$ by Lemma \ref{l3}.

2) $w_n = x A_n y B_n z$. Now $A_n\neq B_n$ by the definition of special letters.

3) $w_n = x A_n z = x B_n z$. Now $A_n=B_n$.

\begin{proposition}\label{pro1}
Suppose $w_n$ is a rich square-free $n$-ary word, where $n\geq3$.

1) If $w_n = x B_n y A_n z$, where the letters $A_n$ and $B_n$ are the right and left special letters of $w_n$, respectively, then $|w_n|\leq 2r(n-1)+r(n-2)+2$.

2) If $w_n = x A_n y B_n z$, where the letters $A_n$ and $B_n$ are the right and left special letters of $w_n$, respectively, then $|w_n|\leq r(n-1)+r(n-2)+r(n-3)+2\leq 2r(n-1)$ and $|x|,|z|\leq r(n-2)+r(n-3)+1$, where $r(n-3)=0$ if $n=3$.

3) If $w_n = x A_n z = x B_n z$, where the letter $A_n=B_n$ is both the right and left special letter of $w_n$, then $|w_n|\leq 2r(n-1)+1$.
\end{proposition}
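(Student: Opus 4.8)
The plan is to treat the three cases separately, using throughout that every factor of a rich square-free word is again rich and square-free, and the following recurring observation: if a letter $a$ is the left (resp. right) special letter of $w_n$, then the part of $w_n$ strictly to its left (resp. right) omits $a$, hence is a rich square-free word on $n-1$ letters and has length at most $r(n-1)$.

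Cases 1 and 3 are short. For Case 3 the left-special and right-special occurrences coincide, so $A_n=B_n$ is unioccurrent and $w_n=xA_nz$ with $A_n\notin x$, $A_n\notin z$; both $x,z$ are $(n-1)$-ary, giving $|w_n|=|x|+1+|z|\le 2r(n-1)+1$. For Case 1, Lemma \ref{l3} gives $\textrm{Alph}(y)=\textrm{Alph}(w_n)\setminus\{A_n,B_n\}$, so $|y|\le r(n-2)$, while $B_n$ left-special and $A_n$ right-special give $|x|,|z|\le r(n-1)$; summing yields $|w_n|=|x|+|y|+|z|+2\le 2r(n-1)+r(n-2)+2$.

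Case 2 is the substantial one. I first record the easy facts: since $A_n$ is right-special its rightmost occurrence is the displayed one, so $A_n\notin yB_nz$, and symmetrically $B_n\notin xA_ny$; hence $A_n,B_n\notin y$, so $|y|\le r(n-2)$, and both $xA_ny$ and $yB_nz$ omit a special letter and so have length at most $r(n-1)$. I then claim the whole proposition reduces to the single estimate $|z|\le r(n-2)+r(n-3)+1$. Granting it, $|w_n|=|xA_ny|+1+|z|\le r(n-1)+r(n-2)+r(n-3)+2$, and the bound $\le 2r(n-1)$ follows from $r(n-1)\ge 2r(n-2)+1$ together with $r(n-2)\ge r(n-3)+1$; moreover the symmetric bound $|x|\le r(n-2)+r(n-3)+1$ comes for free by applying the $z$-estimate to $\widetilde{w_n}$, which is again a Case-2 word (with the roles of $A_n$ and $B_n$ exchanged) whose $z$-block is exactly $\widetilde{x}$.

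It remains to bound $z$, and this is where I expect the real work to lie. If $z$ omits a second letter besides $A_n$ then $|z|\le r(n-2)$ immediately, so assume $z$ uses every letter but $A_n$, in particular $B_n$. Factoring $w_n$ at the occurrences of the left-special letter as $w_n=u_1B_nu_2B_n\cdots B_nu_l$, we have $u_1=xA_ny$ and $z=u_2B_nu_3\cdots B_nu_l$; by Lemma \ref{l2} the blocks $u_2,u_3,\dots$ are palindromes whose alphabets strictly decrease, and since they lie in $z$ they also omit $A_n$, so $|\textrm{Alph}(u_2)|\le n-2$ and $|\textrm{Alph}(u_3)|\le n-3$. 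A direct size count shows the naive per-block bound is too weak unless there are at most two blocks, so the crux is to prove that $B_n$ occurs at most twice in $w_n$, i.e. $z=u_2$ or $z=u_2B_nu_3$; this gives $|z|\le |u_2|+1+|u_3|\le r(n-2)+r(n-3)+1$ at once. I expect this last fact to be the main obstacle. The plan is to assume a third occurrence of $B_n$ and to track complete returns (Proposition \ref{p3}) to $B_n$ and to the middle letters of $u_2,u_3$, pinning down their unioccurrence via Lemma \ref{l1}; the feature to exploit is that $A_n$, the right-special letter, lies to the left of every $B_n$ and cannot recur to the right, so the palindromic symmetry forced around the extra $B_n$'s cannot be completed without producing a square. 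This is the Case-2 analogue of Lemma \ref{l3}, and essentially all the difficulty of the proposition is concentrated here.
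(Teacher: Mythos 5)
Your Cases 1 and 3 match the paper's argument, and your reduction of Case 2 to the single estimate $|z|\le r(n-2)+r(n-3)+1$ (with $|x|$ then handled by reversal, and the passage to $\le 2r(n-1)$ via the basic recursion) is sound bookkeeping. But the proposal does not prove that estimate: you reduce it to the claim that $B_n$ occurs at most twice in $w_n$, and you explicitly leave that claim as "the main obstacle," offering only a heuristic plan (tracking returns and hoping the forced symmetry produces a square). That is a genuine gap, and it is where all the content of Case 2 lies. Moreover, it is a detour: the paper neither proves nor needs any bound on the number of occurrences of $B_n$, and it is far from clear that your claim can be pushed through once the blocks have three or more letters available to them.

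The paper obtains the estimate with no control on how often $B_n$ recurs, by splitting $z$ at the middle letter of the \emph{first} return rather than block by block. If $B_n\in z$, write the first complete return to $B_n$ as $B_n z_1 c \widetilde{z_1} B_n$, so that $z=z_1c\widetilde{z_1}B_nz_2$, where $z_2$ may contain arbitrarily many further occurrences of $B_n$. Lemma \ref{l1} gives $c\notin z_1$, and Lemma \ref{l2}, applied to the decomposition of $w_n$ by its left special letter $B_n$ (here $c$ is the middle letter of the block $u_2=z_1c\widetilde{z_1}$), gives that $c$ avoids every later block, hence $c\notin z_2$; also $c\neq B_n$. Therefore the entire tail $\widetilde{z_1}B_nz_2$ omits both $c$ and $A_n$, so it is a single rich square-free word on at most $n-2$ letters and $|\widetilde{z_1}B_nz_2|\le r(n-2)$, while $z_1$ omits $c$, $A_n$ and $B_n$, so $|z_1|\le r(n-3)$. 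This yields $|z|\le r(n-3)+1+r(n-2)$ at once. In other words, your per-block count $|u_2|+|u_3|+\cdots$ is too lossy not because three or more blocks must be excluded, but because it ignores that Lemma \ref{l2} lets you fuse the second half of $u_2$ with all subsequent blocks into one $(n-2)$-ary word. Replacing your unproven claim by this two-piece split completes Case 2 exactly along the lines you set up.
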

\begin{proof}
Let us denote $A=\textrm{Alph}(w_n)$.

1) By the definition of special letters, we have that $\textrm{Alph}(x)=A\setminus\{B_n\}$ and $\textrm{Alph}(z)=A\setminus\{A_n\}$. These mean $|x|,|z|\leq r(n-1)$.
From Lemma \ref{l3} we get that $\textrm{Alph}(y)=A\setminus\{A_n,B_n\}$, which means $|y|\leq r(n-2)$, since $A_n\neq B_n$.
Now $$|w_n|=|x|+|B_n|+|y|+|A_n|+|z|\leq r(n-1)+1+r(n-2)+1+r(n-1)=2r(n-1)+r(n-2)+2.$$

2) If $A_n\notin x$, then $|x|\leq r(n-2)$. If $A_n\in x$ then we can take the rightmost occurrence of it in $x$ and get that $xA_n=x_2 A_n x_1 c \widetilde{x_1} A_n$, where $A_n\notin x_1 c \widetilde{x_1}$ and by Lemma \ref{l2} $c\notin x_2 A_n x_1$.
Now $\textrm{Alph}(x_2 A_n x_1)=A\setminus\{c,B_n\}$ and $\textrm{Alph}(\widetilde{x_1})=A\setminus\{c,A_n,B_n\}$, where $c\neq B_n$ since $B_n$ is the left special letter of $w_n$.
This means $|x|=|x_2 A_n x_1|+|c|+|\widetilde{x_1}|\leq r(n-2)+r(n-3)+1$, where $r(n-3)=0$ if $n=3$.
The same holds for $z$.

We have $\textrm{Alph}(y B_n z)=A\setminus\{A_n\}$, which means $|y B_n z|\leq r(n-1)$.
Now $$|w_n|=|x|+|A_n|+|y B_n z|\leq [r(n-2)+r(n-3)+1]+1+r(n-1)=r(n-1)+r(n-2)+r(n-3)+2.$$

From the basic recursion we know that $r(n)\geq 2 r(n-1)+1$.
This means that $r(n-1)+r(n-2)+r(n-3)+2\leq r(n-1)+r(n-2)+2r(n-3)+2 \leq r(n-1)+2r(n-2)+1\leq 2r(n-1)$, which we needed to prove.

3) By the definition of special letters, we have that $\textrm{Alph}(x)=\textrm{Alph}(z)=A\setminus\{A_n\}$, which means $|x|,|z|\leq r(n-1)$.
Now $$|w_n|=|x|+|A_n|+|z|\leq r(n-1)+1+r(n-1)=2r(n-1)+1.$$
\end{proof}

\begin{corollary}\label{cor2}
$r(n)\leq 2r(n-1)+r(n-2)+2$, for $n\geq3$.
\end{corollary}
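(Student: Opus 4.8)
The plan is to obtain the corollary as an immediate consequence of Proposition \ref{pro1} together with the trichotomy of special-letter configurations listed just above it. The whole content of the corollary is the observation that the bound coming from the first case dominates the other two, so that a single inequality covers every possibility.

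First I would fix a longest rich square-free word $w_n$ on an alphabet of size $n$, so that $|w_n|=r(n)$, and observe that it suffices to bound $|w_n|$. Before applying Proposition \ref{pro1} I would check that $w_n$ is genuinely $n$-ary: if $w_n$ used at most $n-1$ distinct letters, then $r(n)=|w_n|\leq r(n-1)$, which is already dominated by the claimed bound, so this case is harmless. Hence I may assume $|\textrm{Alph}(w_n)|=n$. Every nonempty word has both a left and a right special letter (scan $w_n$ from the left, resp. from the right, until the last new letter first appears, and take the prefix, resp. suffix, before it), so $w_n$ possesses a right special letter $A_n$ and a left special letter $B_n$.

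Next I would invoke the trichotomy preceding Proposition \ref{pro1}: the relative positions of $A_n$ and $B_n$ force exactly one of the three shapes $w_n=xB_ny A_n z$, $w_n=x A_ny B_n z$, or $w_n=x A_n z=x B_n z$. Proposition \ref{pro1} then bounds $|w_n|$ in these cases by $2r(n-1)+r(n-2)+2$, by $r(n-1)+r(n-2)+r(n-3)+2\leq 2r(n-1)$, and by $2r(n-1)+1$, respectively. The remaining step is to check that the first quantity is the largest of the three: since $r(n-2)\geq 0$, both $2r(n-1)$ and $2r(n-1)+1$ are at most $2r(n-1)+r(n-2)+2$. Therefore $|w_n|\leq 2r(n-1)+r(n-2)+2$ in every case, which yields $r(n)\leq 2r(n-1)+r(n-2)+2$.

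There is essentially no obstacle here, as all the genuine work is already carried out in Lemma \ref{l3} and Proposition \ref{pro1}; the corollary only has to recognize that Case 1 produces the weakest (largest) of the three bounds. The single point deserving a line of care is the reduction to the $n$-ary situation, which guarantees that the hypotheses of Proposition \ref{pro1}, stated for an $n$-ary word, actually apply to the extremal $w_n$.
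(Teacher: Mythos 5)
Your proposal is correct and takes essentially the same approach as the paper: the paper's own proof is a one-line appeal to Proposition \ref{pro1} having covered all three possible special-letter configurations of $w_n$. Your write-up merely makes explicit the details the paper leaves implicit (the extremal word being $n$-ary, the existence of the left and right special letters, and the fact that the Case 1 bound dominates the other two), all of which are handled correctly.
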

\begin{proof}
We get our claim from Proposition \ref{pro1}, since the proposition covered all the three different possible cases for $w_n$.
\end{proof}

We do not solve the recursion $r(n)\leq 2r(n-1)+r(n-2)+2, r(2)=3, r(1)=1$, in a closed-form, but we will estimate it.
We use the inequality $r(n)\geq 2 r(n-1)+1$ from the basic recursion, and the fact that $r(4)=15>13$.
For $n\geq8$ we have 
$$r(n)\leq 2r(n-1)+r(n-2)+2 \leq 2(2r(n-2)+r(n-3)+2)+r(n-2)+2 = 5r(n-2)+2r(n-3)+6$$
$$\leq 5(2r(n-3)+r(n-4)+2)+2r(n-3)+6 = 12r(n-3)+5r(n-4)+16$$
$$< 12r(n-3)+5r(n-4)+16+(r(n-4)-13) = 12r(n-3)+6r(n-4)+3 \leq 15r(n-3)$$
$$< 2,47^3r(n-3) < 2,47^n,$$
where the last inequality comes from the fact that $r(n)<2,47^n$ for $1\leq n\leq 7$.
Together with the lower bound, we now have $2,008^n<r(n)<2,47^n$ for $n\geq5$.

This upper bound can still be improved. The cases 2 and 3 from Proposition \ref{pro1} already give better or equal upper bounds than the basic recursion, i.e. $r(n)\leq 2r(n-1)+1$.
This means we need to look closer only for the case 1.

\begin{proposition}\label{pro2}
$r(n)\leq 5r(n-2)+4$, for $n\geq7$.
\end{proposition}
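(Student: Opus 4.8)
The plan is to reduce the statement to case 1 of Proposition \ref{pro1} and then sharpen the bound there. In cases 2 and 3 the proposition already gives $|w_n|\leq 2r(n-1)+1$, and this is harmless: using Corollary \ref{cor2} in the form $r(n-1)\leq 2r(n-2)+r(n-3)+2$ together with the basic recursion $r(n-2)\geq 2r(n-3)+1$,
$$2r(n-1)+1\leq 4r(n-2)+2r(n-3)+5\leq 5r(n-2)+4,$$
the last step being equivalent to $2r(n-3)+1\leq r(n-2)$. So only case 1, $w_n=xB_nyA_nz$, requires work, and there $|y|\leq r(n-2)$ by Lemma \ref{l3}.

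Next I would extract the internal geometry of $x$ and $z$. Pairing the rightmost occurrence of $A_n$ inside $x$ with the special occurrence of $A_n$ between $y$ and $z$, Proposition \ref{p3} forces the complete return $A_n x_1 B_n y A_n$ to be a palindrome; since $B_n$ can only be its central letter, this gives $x_1=\widetilde{y}$, i.e. $x=x_2A_n\widetilde{y}$ with $A_n$ the right special letter of $x$. The mirror argument yields $z=\widetilde{y}B_nz_2$ with $B_n$ the left special letter of $z$. Hence $|w_n|=|x_2|+|z_2|+3|y|+4$, and since $|y|\leq r(n-2)$ the proposition follows as soon as I establish $|x|\leq 2r(n-2)+1$ and $|z|\leq 2r(n-2)+1$.

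To bound $|x|$ (the case of $z$ being symmetric under reversal) I would apply Proposition \ref{pro1} to the $(n-1)$-ary word $x$, whose right special letter is the $A_n$ just identified. If $A_n$ is unioccurrent in $x$ then $\textrm{Alph}(x_2)\subseteq A\setminus\{A_n,B_n\}$, so $|x_2|\leq r(n-2)$ and $|x|\leq 2r(n-2)+1$; more generally, if $x$ falls into case 2 or case 3 of the proposition one gets $|x|\leq 2r(n-2)$ or $|x|\leq 2r(n-2)+1$ directly. The only remaining possibility is that $x$ is \emph{itself} in case 1, i.e. its left special letter $\beta\neq A_n$ occurs before its rightmost $A_n$; the bare bound is then $|x|\leq 2r(n-2)+r(n-3)+2$, which is too weak, and the same issue can arise for $z_2$.

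This sub-case is the main obstacle, and I expect it to contain the real combinatorial content. The plan is to exploit the rigidity of the overhang: the suffix of $x$ after its right special $A_n$ is forced to be exactly $\widetilde{y}$, and $\beta$ must occur inside $\widetilde{y}$. Applying Lemma \ref{l2} to $\widetilde{x}$, the alphabets of the $A_n$-blocks of $x$ strictly decrease after the penultimate block $t_2$, so all but the last block have alphabet contained in $\textrm{Alph}(t_2)$, whence $\textrm{Alph}(x_2)=\{A_n\}\cup\textrm{Alph}(t_2)$; thus $x$ lies in case 1 precisely when the palindromic block $t_2$ already realises the full set $A\setminus\{A_n,B_n\}$. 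I would then show this is impossible: a full palindromic block $t_2$ placed immediately to the left of $A_n\widetilde{y}B_ny A_n$ should, via Proposition \ref{p3}, produce either two non-coincident complete returns to $\beta$ or a repetition of the type $(\,\cdots A_n\,\cdots)^2$, contradicting square-freeness by exactly the mechanism used in Proposition \ref{p2.1} and in the $c_n\neq d_n$ arguments. Carrying this square-forcing out carefully for $x$ and symmetrically for $z$ yields $|x|,|z|\leq 2r(n-2)+1$ and therefore
$$|w_n|\leq (2r(n-2)+1)+r(n-2)+(2r(n-2)+1)+2=5r(n-2)+4.$$
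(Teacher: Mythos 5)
Your skeleton, up to the point where the real work begins, coincides with the paper's proof: the disposal of cases 2 and 3 of Proposition \ref{pro1} via Corollary \ref{cor2} and the basic recursion, and the complete-return argument forcing $w_n = x_1 A_n \widetilde{y} B_n y A_n \widetilde{y} B_n z_1$, are exactly the paper's first steps. The gap is the step you defer and only sketch: the claim that $x$ (symmetrically $z$) can never itself fall into case 1 of Proposition \ref{pro1}, i.e.\ that its penultimate $A_n$-block $t_2=\widetilde{x_B}\,B\,x_B$ can never realise the full alphabet $A\setminus\{A_n,B_n\}$. This claim carries the entire content of the proposition, and it is almost certainly false rather than merely unproven. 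When the middle letter $B$ of that block equals the left special letter $B_{n-2}$ of $\widetilde{y}$, Proposition \ref{p3} and Lemma \ref{l1} force $\widetilde{x_B}=y_1 A_{n-2} y_3$, the prefix of $\widetilde{y}$ up to its left special letter; then $\textrm{Alph}(t_2)=A\setminus\{A_n,B_n\}$ holds automatically, so your ``impossible'' configuration is precisely Case 2.2 of the paper's proof, which the paper treats as live. And no square is forced there: the natural candidate repetition $(\widetilde{y_1} A_n \widetilde{y} B_n \widetilde{y_2} B_{n-2} \widetilde{y_3} A_{n-2})^2$ closes up only if $z_1$ happens to begin with the matching word, and what square-freeness actually yields is the far weaker conclusion that $z_1$ must avoid the single letter $A_{n-2}$, whence $|z_1|\leq r(n-3)$.

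This also shows why your plan cannot be repaired by bounding the two sides separately. In that configuration the best bound obtainable for the prefix side is $|x|\leq 2r(n-2)+r(n-3)+2$, which exceeds your target $2r(n-2)+1$; the proposition nevertheless holds because a long $x$ forces a short $z$ (namely $|z|\leq r(n-2)+r(n-3)+1$), so that the sum still fits under $5r(n-2)+4$. The paper's proof is organised entirely around this trade-off between the two ends of the word, and its Case 3 (where in addition $B_n\in z_1$) needs a further cascade of subcases of the same flavour, each producing either an excluded letter on one side or an explicit square. So your proposal reproduces the paper's easy reduction but replaces the actual combinatorial work --- several pages of case analysis --- by a one-sentence impossibility claim that contradicts the structure of the paper's own argument.
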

\begin{proof}
Suppose $w_n=x B_n y A_n z$ is a rich square-free $n$-ary word, where $n\geq7$ and the letters $A_n$ and $B_n$ are the right and left special letters of $w_n$, respectively. This means $A_n\neq B_n$.
If $w_n$ is not of this form, then we already know from Proposition \ref{pro1} that $|w_n|\leq 2r(n-1)+1$,
which means we can use the upper bound of Corollary \ref{cor2} and get that
$$|w_n|\leq 2(2r(n-2)+r(n-3)+2)+1=4r(n-2)+2r(n-3)+5\leq 5r(n-2)+4,$$
where the last inequality comes from the basic recursion $r(n)\geq 2 r(n-1)+1$.
From now on, we will use the basic recursion without mentioning it.

By the definition of special letters, we have that $A_n\in x$ and $B_n\in z$. From Lemma \ref{l3} we know that $A_n,B_n\notin y$. Since $A_n\neq B_n$, we can take the rightmost
occurrence of $A_n$ in $x$ and the leftmost occurrence of $B_n$ in $z$ and get, by Proposition \ref{p3}, that $w_n = x_1 A_n \widetilde{y} B_n y A_n \widetilde{y} B_n z_1$.

We divide this proof into three different cases depending whether $A_n\in x_1$ or $A_n\notin x_1$ and whether $B_n\in z_1$ or $B_n\notin z_1$.


Case 1) $A_n\notin x_1, B_n\notin z_1$.

Now we have $A_n,B_n\notin x_1,z_1,y$. This means $|x_1|,|z_1|,|y|\leq r(n-2)$. Together we get
$$|w_n|=|x_1 A_n\widetilde{y} B_n y A_n \widetilde{y} B_n z_1|\leq 5r(n-2)+4.$$


Case 2) $A_n\in x_1, B_n\notin z_1$ (the case $A_n\notin x_1, B_n\in z_1$ is symmetric).

If we take the rightmost occurrence of $A_n$ in $x_1$ we get, by Proposition \ref{p3}, Lemma \ref{l1} and Lemma \ref{l2}, that $w_n = x_2 A_n \widetilde{x_B} B x_B A_n \widetilde{y} B_n y A_n \widetilde{y} B_n z_1$,
where $B\ (\neq A_n,B_n)$ is a letter, $A_n,B\notin x_B$, $B\notin x_2$ and $x_1=x_2 A_n \widetilde{x_B} B x_B$.
Since $B_n$ is a left special letter of $w_n$, we have that $B_n\notin x_2 A_n \widetilde{x_B}$ and $B_n\notin x_B$.
We also have $A_n,B_n\notin y,z_1$.
Together we have $|y|,|z_1|,|x_2 A_n \widetilde{x_B}|\leq r(n-2)$ and $|x_B|\leq r(n-3)$.

Let us mark the left special letter of $\widetilde{y}$ with $B_{n-2}$.
Now we divide this into two cases whether $B\neq B_{n-2}$ or $B = B_{n-2}$.

Case 2.1) $B\neq B_{n-2}$.

Since $B_{n-2}$ is the left special letter of $\widetilde{y}$, we must have $B_{n-2}\notin x_B$. Otherwise we would have, by Proposition \ref{p3}, that $B\in x_B$, which is impossible by Lemma \ref{l1}.
From Lemma \ref{l2} we now get that $B_{n-2}\notin x_2$.
Earlier, we already noted that $B_n,B\notin x_2 A_n \widetilde{x_B}$ and $A_n,B_n,B\notin x_B$.
Together we now get $|x_2 A_n \widetilde{x_B}|\leq r(n-3)$ and $|x_B|\leq r(n-4)$, and therefore
$$|w_n|=|x_2 A_n \widetilde{x_B}|+|B|+|x_B|+|A_n \widetilde{y} B_n y A_n \widetilde{y} B_n|+|z_1|$$
$$\leq r(n-3)+1+r(n-4)+[3r(n-2)+4]+r(n-2) = 4r(n-2)+r(n-3)+r(n-4)+5$$
$$< 4r(n-2)+r(n-3)+r(n-4)+5+r(n-4) \leq 5r(n-2)+3,$$
where we added the extra $r(n-4)$ after the second inequality only to make the use of the basic recursion simpler.

Case 2.2) $B = B_{n-2}$.

If we can prove that $|z_1|\leq r(n-3)$, then we get
$$|w_n|=|x_2 A_n \widetilde{x_B}|+|B|+|x_B|+|A_n\widetilde{y}B_n y A_n \widetilde{y} B_n|+|z_1|$$
$$\leq r(n-2)+1+r(n-3)+[3r(n-2)+4]+r(n-3) = 4r(n-2)+2r(n-3)+5\leq 5r(n-2)+4.$$
So we need to prove there exists some letter, different from $A_n$ and $B_n$, such that it does not belong to $z_1$.
We divide this into three cases depending of which form $\widetilde{y}$ is.

Case 2.2.1) $\widetilde{y}=y_1 A_{n-2} y_3 B_{n-2} y_2$, where the letters $A_{n-2}$ and $B_{n-2}$ are the right and left special letters of $\widetilde{y}$, respectively.

Because $B = B_{n-2}$, we have $\widetilde{x_B} = y_1 A_{n-2} y_3$, by Proposition \ref{p3} and Lemma \ref{l1}.
Now $A_{n-2}\notin z_1$, since otherwise we could take the leftmost occurrence of $A_{n-2}$ in $z_1$ and get a square in $w_n$:
$$ \widetilde{y_1} A_n \widetilde{y} B_n \widetilde{y_2} B_{n-2} \widetilde{y_3} A_{n-2}
   \widetilde{y_1} A_n \widetilde{y} B_n \widetilde{y_2} B_{n-2} \widetilde{y_3} A_{n-2},$$
where the rightmost $\widetilde{y_2} B_{n-2} \widetilde{y_3} A_{n-2}$ is a prefix of $z_1$ and the leftmost $\widetilde{y_1}$ is a suffix $x_1$.

Case 2.2.2) $\widetilde{y}=y_1 B_{n-2} y_2$, where $B_{n-2}$ is also the right special letter of $\widetilde{y}$.

Because $B = B_{n-2}$, we have $\widetilde{x_B} = y_1$.
Now $B_{n-2}\notin z_1$, since otherwise, similar to Case 2.2.1, we could take the leftmost occurrence of $B_{n-2}$ in $z_1$ and get a square in $w_n$:
$$ \widetilde{y_1} A_n \widetilde{y} B_n \widetilde{y_2} B_{n-2}
   \widetilde{y_1} A_n \widetilde{y} B_n \widetilde{y_2} B_{n-2}.$$

Case 2.2.3) $\widetilde{y}=y_1 A_{n-2} y_3 B_{n-2} \widetilde{y_3} A_{n-2} y_3 B_{n-2} y_2$, where the rightmost $A_{n-2}$ and the leftmost $B_{n-2}$ are the right and left special letters of $\widetilde{y}$, respectively.

Because $B = B_{n-2}$, we have $\widetilde{x_B} = y_1 A_{n-2} y_3$.
Again $A_{n-2}\notin z_1$, since otherwise, similar to Case 2.2.1, we could take the leftmost occurrence of $A_{n-2}$ in $z_1$ and get a square in $w_n$:
$$ y_3 B_{n-2} \widetilde{y_3} A_{n-2} \widetilde{y_1} A_n \widetilde{y} B_n \widetilde{y_2} B_{n-2} \widetilde{y_3} A_{n-2}
   y_3 B_{n-2} \widetilde{y_3} A_{n-2} \widetilde{y_1} A_n \widetilde{y} B_n \widetilde{y_2} B_{n-2} \widetilde{y_3} A_{n-2}.$$


Case 3) $A_n\in x_1, B_n\in z_1$.

If we take the rightmost occurrence of $A_n$ in $x_1$ and the leftmost occurrence of $B_n$ in $z_1$,
we get that $w_n = x_2 A_n \widetilde{x_B} B x_B A_n \widetilde{y} B_n y A_n \widetilde{y} B_n z_A A \widetilde{z_A} B_n z_2$, where $A,B\ (\neq A_n,B_n)$ are letters and $x_1=x_2 A_n \widetilde{x_B} B x_B$, $z_1=z_A A \widetilde{z_A} B_n z_2$.
Similar to Case 2, we have $|y|,|x_1|,|z_1|,|x_2 A_n \widetilde{x_B}|$,
$|\widetilde{z_A} B_n z_2|\leq r(n-2)$ and $|x_B|,|z_A|\leq r(n-3)$.

We divide this case now into three cases depending of which form $\widetilde{y}$ is.

Case 3.1) $\widetilde{y}=y_1 B_{n-2} y_2$, where $B_{n-2}$ is both the right and left special letter of $\widetilde{y}$.

If $A=B=B_{n-2}$ then $x_B=\widetilde{y_1}$ and $z_A=\widetilde{y_2}$. This would create a square in $w_n$:
$$B_{n-2} \widetilde{y_1} A_n \widetilde{y} B_n \widetilde{y_2}
  B_{n-2} \widetilde{y_1} A_n \widetilde{y} B_n \widetilde{y_2}.$$
Now we divide this into two possible cases: $A,B\neq B_{n-2}$ and $A=B_{n-2}$,$B\neq B_{n-2}$.

Case 3.1.1) $A,B\neq B_{n-2}$.

Similar to Case 2.1, we get $B_n,B_{n-2},B\notin x_2 A_n \widetilde{x_B}$ and $A_n,B_n,B_{n-2},B\notin x_B$.
In the same way, we get $A_n,A,B_{n-2}\notin \widetilde{z_A} B_n z_2$ and $A_n,A,B_n,B_{n-2}\notin z_A$.
Together we have
$$|w_n|=|x_2 A_n \widetilde{x_B}|+|B|+|x_B|+|A_n \widetilde{y} B_n y A_n \widetilde{y} B_n|+|z_A|+|A|+|\widetilde{z_A} B_n z_2|$$
$$\leq r(n-3)+1+r(n-4)+[3r(n-2)+4]+r(n-4)+1+r(n-3)=3r(n-2)+2r(n-3)+2r(n-4)+6$$
$$< 3r(n-2)+2r(n-3)+2r(n-4)+6+2r(n-4)\leq 5r(n-2)+2.$$

Case 3.1.2) $A=B_{n-2}$ and $B\neq B_{n-2}$ (the case $A\neq B_{n-2}$ and $B=B_{n-2}$ is symmetric).

Now $z_A=\widetilde{y_2}$. Let us mark $y_1=u_1 B_{n-4} u_2$ and $y_2=v_1 A_{n-4} v_2$, where $B_{n-4}$ and $A_{n-4}$ are the left special letters of $y_1$ and $y_2$, respectively.

We prove $B\neq B_{n-4}$. Suppose to the contrary that $B=B_{n-4}$.
Since $B_{n-2}$ is the right and left special letter of $\widetilde{y}$, we have that $A_{n-4}\in y_1$.
If we take the rightmost occurrence of $A_{n-4}$ in $y_1$ then we get from Proposition \ref{p3} that $A_{n-4}\widetilde{v_1}$ is a suffix of $y_1$ and hence $A_{n-4}$ is the right special letter of $y_1$.
There are now three different cases how $A_{n-4}$ and $B_{n-4}$ can appear inside $y_1$ with respect to each other. These all yield a square and hence a contradiction:

- If $y_1=u'_1 A_{n-4} u_3 B_{n-4} \widetilde{u_3} A_{n-4} u_3 B_{n-4} u'_2$, where $u_1=u'_1 A_{n-4} u_3$, $u_2=\widetilde{u_3} A_{n-4} u_3 B_{n-4} u'_2$ and $v_1=\widetilde{u'_2} B_{n-4} \widetilde{u_3}$, then we have a square in $w_n$:
$$A_{n-4} u_3 B_{n-4} \widetilde{u_3} A_{n-4} \widetilde{u'_1} A_n \widetilde{y} B_n \widetilde{y_2} B_{n-2} \widetilde{u'_2} B_{n-4} \widetilde{u_3}
  A_{n-4} u_3 B_{n-4} \widetilde{u_3} A_{n-4} \widetilde{u'_1} A_n \widetilde{y} B_n \widetilde{y_2} B_{n-2} \widetilde{u'_2} B_{n-4} \widetilde{u_3}.$$

- If $y_1 =u_1 B_{n-4} u_2=u_1 A_{n-4} \widetilde{v_1}$ (i.e. $A_{n-4}=B_{n-4}$), then $u_2=\widetilde{v_1}$ and we have a square in $w_n$:
$$B_{n-4} \widetilde{u_1} A_n \widetilde{y} B_n \widetilde{y_2} B_{n-2} \widetilde{u_2}
  B_{n-4} \widetilde{u_1} A_n \widetilde{y} B_n \widetilde{y_2} B_{n-2} \widetilde{u_2}.$$

- If $y_1=u'_1 A_{n-4} u_3 B_{n-4} u_2$, where $u_1=u'_1 A_{n-4} u_3$ and $v_1=\widetilde{u_2} B_{n-4} \widetilde{u_3}$, then we have a square in $w_n$:
$$B_{n-4} \widetilde{u_3} A_{n-4} \widetilde{u'_1} A_n \widetilde{y} B_n \widetilde{y_2} B_{n-2} \widetilde{u_2}
  B_{n-4} \widetilde{u_3} A_{n-4} \widetilde{u'_1} A_n \widetilde{y} B_n \widetilde{y_2} B_{n-2} \widetilde{u_2}.$$

This means $B\neq B_{n-4}$. Similar to Case 2.1 we now get that $B_n,B_{n-2},B_{n-4},B\notin x_2 A_n \widetilde{x_B}$ and $A_n,B_n,B_{n-2},B_{n-4},B\notin x_B$.
Together we have
$$|w_n|=|x_2 A_n \widetilde{x_B}|+|B|+|x_B|+|A_n \widetilde{y} B_n y A_n \widetilde{y} B_n|+|z_A|+|B_{n-2}|+|\widetilde{z_A} B_n z_2|$$
$$\leq r(n-4)+1+r(n-5)+[3r(n-2)+4]+r(n-3)+1+r(n-2)$$
$$= 4r(n-2)+r(n-3)+r(n-4)+r(n-5)+6$$
$$< 4r(n-2)+r(n-3)+r(n-4)+r(n-5)+6+r(n-5)\leq 5r(n-2)+3.$$


Case 3.2) $\widetilde{y}=y_1 A_{n-2} y_3 B_{n-2} y_2$, where the letters $A_{n-2}$ and $B_{n-2}$ are the right special letter and the left special letter of $\widetilde{y}$, respectively.

If $A=A_{n-2}$, $B=B_{n-2}$ then we would have a square in $w_n$:
$$A_{n-2} \widetilde{y_1} A_n \widetilde{y} B_n \widetilde{y_2} B_{n-2} \widetilde{y_3}
  A_{n-2} \widetilde{y_1} A_n \widetilde{y} B_n \widetilde{y_2} B_{n-2} \widetilde{y_3}.$$
This means we can divide this case, similar to Case 3.1, into two different cases: $A=A_{n-2}$, $B\neq B_{n-2}$ and $A\neq A_{n-2}$, $B\neq B_{n-2}$.

Case 3.2.1) $A\neq A_{n-2}$, $B\neq B_{n-2}$.

Similar to Case 2.1, we get $B_n,B_{n-2},B\notin x_2 A_n \widetilde{x_B}$ and $A_n,B_n,B_{n-2},B\notin x_B$.
In the same way, we get $A_n,A_{n-2},A\notin \widetilde{z_A} B_n z_2$ and $A_n,A_{n-2},A,B_n\notin z_A$.
Together we have
$$|w_n|=|x_2 A_n \widetilde{x_B}|+|B|+|x_B|+|A_n \widetilde{y} B_n y A_n \widetilde{y} B_n|+|z_A|+|A|+|\widetilde{z_A} B_n z_2|$$
$$\leq r(n-3)+1+r(n-4)+[3r(n-2)+4]+r(n-4)+1+r(n-3)=3r(n-2)+2r(n-3)+2r(n-4)+6$$
$$< 3r(n-2)+2r(n-3)+2r(n-4)+6+2r(n-4)\leq 5r(n-2)+2.$$

Case 3.2.2) $A=A_{n-2}$, $B\neq B_{n-2}$ (the case $A\neq A_{n-2}$,$B=B_{n-2}$ is symmetric).

Now $z_A = \widetilde{y_2} B_{n-2} \widetilde{y_3}$.
We divide this case into two cases: $A_{n-2}\notin y_1$ and $A_{n-2}\in y_1$.

Case 3.2.2.1) $A_{n-2}\notin y_1$.

We must have $A_{n-2}\notin x_1$. Otherwise we could take the rightmost occurrence of $A_{n-2}$ in $x_1$ and get a square in $w_n$:
$$A_{n-2} \widetilde{y_1} A_n \widetilde{y} B_n \widetilde{y_2} B_{n-2} \widetilde{y_3} A_{n-2} \widetilde{y_1} A_n \widetilde{y} B_n \widetilde{y_2} B_{n-2} \widetilde{y_3}.$$

Similar to Case 2.1, we have $B_n,B_{n-2}\notin x_1$.
Since $B_n$ and $B_{n-2}$ are the left special letters of $w_n$ and $\widetilde{y}$, respectively, we have $B_n,B_{n-2}\notin y_1$.
Together with the previous paragraph we get that $A_{n-2},B_n,B_{n-2}\notin x_1 A_n y_1$.
Since $A_{n-2}$ is the right special letter of $\widetilde{y}$ we have $A_n,B_n,A_{n-2}\notin y_3 B_{n-2} y_2$.
These mean $|x_1 A_n y_1|\leq r(n-3)$ and $|y_3 B_{n-2} y_2|\leq r(n-3)$.
Together we have
$$|w_n|=|x_1 A_n y_1|+|A_{n-2}|+|y_3 B_{n-2} y_2|+|B_n y A_n \widetilde{y} B_n|+|z_A|+|A_{n-2}|+|\widetilde{z_A} B_n z_2|$$
$$\leq r(n-3)+1+r(n-3)+[2r(n-2)+3]+r(n-3)+1+r(n-2)$$
$$= 3r(n-2)+3r(n-3)+5 < 3r(n-2)+3r(n-3)+5+r(n-3) \leq 5r(n-2)+3.$$

Case 3.2.2.2) $A_{n-2}\in y_1$.

If we take the rightmost occurrence of $A_{n-2}$ in $y_1$, we get $\widetilde{y}=y'_1 A_{n-2} y_4 B_y \widetilde{y_4} A_{n-2} y_3 B_{n-2} y_2$,
where $B_y$ is a letter, $y_1=y'_1 A_{n-2} y_4 B_y \widetilde{y_4}$ and $A_{n-2}\notin y_4 B_y \widetilde{y_4}$.
Let us mark $y_3= u_1 B_{n-4} u_2$, where $B_{n-4}$ is the left special letter of $y_3$.
We will prove $B_{n-4}\notin x_1$.

Suppose $B_{n-4}\notin y_1$. Now $B_{n-4}\notin x_1$, since otherwise we could take the rightmost occurrence of $B_{n-4}$ in $x_1$ and get a square in $w_n$:
$$A_{n-2} \widetilde{y_1}  A_n \widetilde{y} B_n \widetilde{y_2} B_{n-2} \widetilde{y_3} A_{n-2} \widetilde{y_1} A_n \widetilde{y} B_n \widetilde{y_2} B_{n-2} \widetilde{y_3}.$$
Suppose $B_{n-4}\in y_1$. Because of Lemma \ref{l1}, we must have that $B_y=B_{n-4}$ and $y_4=\widetilde{u}_1$. Also now $B_{n-4}\notin x_1$, since otherwise we would have a square in $w_n$:
$$B_{n-4} \widetilde{u}_1 A_{n-2} \widetilde{y'}_1 A_n \widetilde{y} B_n \widetilde{y}_2 B_{n-2} \widetilde{y_3} A_{n-2} \widetilde{u}_1
  B_{n-4} \widetilde{u}_1 A_{n-2} \widetilde{y'}_1 A_n \widetilde{y} B_n \widetilde{y}_2 B_{n-2} \widetilde{y_3} A_{n-2} \widetilde{u}_1.$$
This means we have $B_{n-4}\notin x_1$.

If $B_y=B_{n-4}$ then we get from Lemma \ref{l2} that $B_{n-4}\notin y'_1 A_{n-2} y_4$.
If $B_y\neq B_{n-4}$ then, since $B_{n-4}$ is the left special letter of $y_3$, we also get from Lemma \ref{l1} and \ref{l2} that $B_{n-4}\notin y'_1 A_{n-2} y_4$.
These mean $B_{n-4}\notin x_1 A_n y'_1 A_{n-2} y_4$.

From Lemma \ref{l1} we get that $B_y\notin\widetilde{y_4}$, which means $A_n,A_{n-2},B_n,B_{n-2},B_y\notin\widetilde{y_4}$.
Since $A_{n-2}$ is the right special letter of $\widetilde{y}$, we have that $A_n,A_{n-2},B_n\notin y_3 B_{n-2} y_2$.
Together we have
$$|w_n|=|x_1 A_n y'_1 A_{n-2} y_4| + |B_y| + |\widetilde{y_4}|+|A_{n-2}|+|y_3 B_{n-2} y_2|+|B_n y A_n \widetilde{y} B_n|+|z_A|+|A_{n-2}|+|\widetilde{z_A} B_n z_2|$$
$$\leq r(n-3)+1+r(n-5)+1+r(n-3)+[2r(n-2)+3]+r(n-3)+1+r(n-2)$$
$$= 3r(n-2)+3r(n-3)+r(n-5)+6 < 3r(n-2)+3r(n-3)+r(n-5)+6+3r(n-5) \leq 5r(n-2)+1.$$


Case 3.3) $\widetilde{y}=y_1 A_{n-2} y_3 B_{n-2} \widetilde{y_3} A_{n-2} y_3 B_{n-2} y_2$, where the rightmost $A_{n-2}$ and the leftmost $B_{n-2}$ are the right and left special letters of $\widetilde{y}$, respectively.

If $A=A_{n-2}$, $B=B_{n-2}$ then we would have a square in $w_n$:
$$B_{n-2} \widetilde{y_3} A_{n-2} \widetilde{y_1} A_n \widetilde{y} B_n \widetilde{y_2} B_{n-2} \widetilde{y_3} A_{n-2} y_3
  B_{n-2} \widetilde{y_3} A_{n-2} \widetilde{y_1} A_n \widetilde{y} B_n \widetilde{y_2} B_{n-2} \widetilde{y_3} A_{n-2} y_3.$$
This means we can divide this case, similar to Case 3.1, into two different cases: $A=A_{n-2}$, $B\neq B_{n-2}$ and $A\neq A_{n-2}$, $B\neq B_{n-2}$.

Case 3.3.1) $A\neq A_{n-2}$ and $B\neq B_{n-2}$.

Similar to Case 2.1, we get $B_n,B_{n-2},B\notin x_2 A_n \widetilde{x_B}$ and $A_n,B_n,B_{n-2},B\notin x_B$.
In the same way, we get $A_n,A_{n-2},A\notin \widetilde{z_A} B_n z_2$ and $A_n,A_{n-2},A,B_n\notin z_A$.
Together we have
$$|w_n|=|x_2 A_n \widetilde{x_B}|+|B|+|x_B|+|A_n \widetilde{y} B_n y A_n \widetilde{y} B_n|+|z_A|+|A|+|\widetilde{z_A} B_n z_2|$$
$$\leq r(n-3)+1+r(n-4)+[3r(n-2)+4]+r(n-4)+1+r(n-3)=3r(n-2)+2r(n-3)+2r(n-4)+6$$
$$< 3r(n-2)+2r(n-3)+2r(n-4)+6+2r(n-4)\leq 5r(n-2)+2.$$

Case 3.3.2) $A=A_{n-2}$, $B\neq B_{n-2}$ (the case $A\neq A_{n-2}$, $B=B_{n-2}$ is symmetric).

Let $A_{n-4}$ be the right special letter of $y_3$.
We will divide this into two cases: $A_{n-4}\notin y_2$ and $A_{n-4}\in y_2$.

Case 3.3.2.1) $A_{n-4}\notin y_2$.

If $A_{n-4}\in z_2$ then we could take the leftmost occurrence of it in $z_2$, which would create a square in $w_n$:
$$ \widetilde{y_3} A_{n-2} y_3 B_{n-2} y_2 B_n \widetilde{y_2} B_{n-2}
   \widetilde{y_3} A_{n-2} y_3 B_{n-2} y_2 B_n \widetilde{y_2} B_{n-2}.$$
This means $A_{n-4}\notin z_2$.
Let us now mark $y_3 = u_1 A_{n-4} u_2$, where the letter $A_{n-4}$ is the right special letter.
We get that $A_{n-4}\notin u_2 B_{n-2} y_2 B_n z_2$. Similar to Case 2.1, we also have $A_n,A_{n-2}\notin u_2 B_{n-2} y_2 B_n z_2$.
From Proposition \ref{pro1} we get that $|u_1|\leq r(n-5)+r(n-6)+1$.
Similar to Case 2.1, we get $B_n,B_{n-2},B\notin x_2 A_n \widetilde{x_B}$ and $A_n,B_n,B_{n-2},B\notin x_B$.
Together we have
$$|w_n|=|x_2 A_n \widetilde{x_B}|+|B|+|x_B|+|A_n \widetilde{y} B_n y A_n \widetilde{y} B_n|+|\widetilde{y_2} B_{n-2} \widetilde{u_2}|+|A_{n-4}\widetilde{u_1}A_{n-2}u_1A_{n-4}|+|u_2 B_{n-2} y_2 B_n z_2|$$
$$\leq r(n-3)+1+r(n-4)+[3r(n-2)+4]+r(n-4)+[2(r(n-5)+r(n-6)+1)+3]+r(n-3)$$
$$ = 3r(n-2)+2r(n-3)+2r(n-4)+2r(n-5)+2r(n-6)+10$$
$$ < 3r(n-2)+2r(n-3)+2r(n-4)+2r(n-5)+2r(n-6)+10+2r(n-6)\leq 5r(n-2)+2.$$

Case 3.3.2.2) $A_{n-4}\in y_2$.

We will divide this case into three cases depending of which form $y_3$ is.

Case 3.3.2.2.1) $y_3 = u_1 A_{n-4} u_3 B_{n-4} u_2$, where $A_{n-4}$ and $B_{n-4}$ are the right and left special letters of $y_3$, respectively.

Since $A_{n-4}\in y_2$, we have that $y_2= \widetilde{u_2} B_{n-4} \widetilde{u_3} A_{n-4} y'_2$, where the $A_{n-4}$ is the leftmost occurrence of $A_{n-4}$ in $y_2$.
If $B_{n-4}\in y_1$ then $y_1=y'_1 B_{n-4} \widetilde{u_3} A_{n-4} \widetilde{u_1}$, where the $B_{n-4}$ is the rightmost occurrence of $B_{n-4}$ in $y_1$.
This would create a square in $\widetilde{y}$:
$$B_{n-4} \widetilde{u_3} A_{n-4} \widetilde{u_1} A_{n-2} y_3 B_{n-2} \widetilde{u_2}
  B_{n-4} \widetilde{u_3} A_{n-4} \widetilde{u_1} A_{n-2} y_3 B_{n-2} \widetilde{u_2}.$$
So $B_{n-4}\notin y_1$.
Now, if $B=B_{n-4}$ then $x_B=\widetilde{u_3} A_{n-4} \widetilde{u_1} A_{n-2} \widetilde{y_1}$ by Lemma \ref{l1}, since $B_{n-4}\notin y_1$. This would create a square in $w_n$:
$$B_{n-4} \widetilde{u_3} A_{n-4} \widetilde{u_1} A_{n-2} \widetilde{y_1} A_n \widetilde{y} B_n \widetilde{y_2} B_{n-2} \widetilde{y_3} A_{n-2} y_3 B_{n-2} \widetilde{u_2}$$
$$B_{n-4} \widetilde{u_3} A_{n-4} \widetilde{u_1} A_{n-2} \widetilde{y_1} A_n \widetilde{y} B_n \widetilde{y_2} B_{n-2} \widetilde{y_3} A_{n-2} y_3 B_{n-2} \widetilde{u_2}.$$
So $B\neq B_{n-4}$. This means that, in similar way as in Case 2.1, we get $B_n,B_{n-2},B_{n-4},B\notin x_2 A_n \widetilde{x_B}$ and $A_n,B_n,B_{n-2},B_{n-4},B\notin x_B$.
Together we have
$$|w_n|=|x_2 A_n \widetilde{x_B}|+|B|+|x_B|+|A_n \widetilde{y} B_n y A_n \widetilde{y} B_n|+|z_A|+|A_{n-2}|+|\widetilde{z_A} B_n z_2|$$
$$\leq r(n-4)+1+r(n-5)+[3r(n-2)+4]+r(n-3)+1+r(n-2)$$
$$= 4r(n-2)+r(n-3)+r(n-4)+r(n-5)+5$$
$$ < 4r(n-2)+r(n-3)+r(n-4)+r(n-5)+5+r(n-5)\leq 5r(n-2)+2.$$

Case 3.3.2.2.2) $y_3 = u_1 B_{n-4} u_2$, where $B_{n-4}$ is both the right and left special letter.

This case is very similar to the previous, Case 3.3.2.2.1.

Now $B_{n-4}$ is both the right and left special letter, which means $A_{n-4}=B_{n-4}$.
Since this case is a subcase of Case 3.3.2.2, we have that $A_{n-4}=B_{n-4}\in y_2$, which means $y_2= \widetilde{u_2} B_{n-4} y'_2$.
If $B_{n-4}\in y_1$ then $y_1=y'_1 B_{n-4} \widetilde{u_1}$ and we would have a square in $\widetilde{y}$:
$$B_{n-4} \widetilde{u_1} A_{n-2} y_3 B_{n-2} \widetilde{u_2}
  B_{n-4} \widetilde{u_1} A_{n-2} y_3 B_{n-2} \widetilde{u_2}.$$
So $B_{n-4}\notin y_1$.
If $B=B_{n-4}$ then $x_B=\widetilde{u_1} A_{n-2} \widetilde{y_1}$. This would create a square in $w_n$:
$$B_{n-4} \widetilde{u_1} A_{n-2} \widetilde{y_1} A_n \widetilde{y} B_n \widetilde{y_2} B_{n-2} \widetilde{y_3} A_{n-2} y_3 B_{n-2} \widetilde{u_2}$$
$$B_{n-4} \widetilde{u_1} A_{n-2} \widetilde{y_1} A_n \widetilde{y} B_n \widetilde{y_2} B_{n-2} \widetilde{y_3} A_{n-2} y_3 B_{n-2} \widetilde{u_2}.$$
So $B\neq B_{n-4}$. This means that, in similar way as in Case 2.1, we get $B_n,B_{n-2},B_{n-4},B\notin x_2 A_n \widetilde{x_B}$ and $A_n,B_n,B_{n-2},B_{n-4},B\notin x_B$.
Again, we have
$$|w_n|=|x_2 A_n \widetilde{x_B}|+|B|+|x_B|+|A_n \widetilde{y} B_n y A_n \widetilde{y} B_n|+|z_A|+|A_{n-2}|+|\widetilde{z_A} B_n z_2|$$
$$\leq r(n-4)+1+r(n-5)+[3r(n-2)+4]+r(n-3)+1+r(n-2)$$
$$= 4r(n-2)+r(n-3)+r(n-4)+r(n-5)+5$$
$$ < 4r(n-2)+r(n-3)+r(n-4)+r(n-5)+5+r(n-5)\leq 5r(n-2)+2.$$

Case 3.3.2.2.3) $y_3 = u_1 A_{n-4} u_3 B_{n-4} \widetilde{u_3} A_{n-4} u_3 B_{n-4} u_2$, where the rightmost $A_{n-4}$ and the leftmost $B_{n-4}$ are the right and left special letters of $y_3$, respectively.

We divide this case into two subcases: $B_{n-4}\notin y_1$ and $B_{n-4}\in y_1$.

Case 3.3.2.2.3.1) $B_{n-4}\notin y_1$.

Now $B\neq B_{n-4}$, since otherwise we would have a square in $w_n$:
$$A_{n-4} u_3 B_{n-4} \widetilde{u_3} A_{n-4} \widetilde{u_1} A_{n-2} \widetilde{y_1} A_n \widetilde{y} B_n \widetilde{y_2} B_{n-2} \widetilde{y_3} A_{n-2} y_3 B_{n-2} \widetilde{u_2} B_{n-4} \widetilde{u_3}$$
$$A_{n-4} u_3 B_{n-4} \widetilde{u_3} A_{n-4} \widetilde{u_1} A_{n-2} \widetilde{y_1} A_n \widetilde{y} B_n \widetilde{y_2} B_{n-2} \widetilde{y_3} A_{n-2} y_3 B_{n-2} \widetilde{u_2} B_{n-4} \widetilde{u_3}.$$
Similar to Case 2.1, we get $B_n,B_{n-2},B_{n-4},B\notin x_2 A_n \widetilde{x_B}$ and $A_n,B_n,B_{n-2},B_{n-4},B\notin x_B$.
Again, we have
$$|w_n|=|x_2 A_n \widetilde{x_B}|+|B|+|x_B|+|A_n \widetilde{y} B_n y A_n \widetilde{y} B_n|+|z_A|+|A_{n-2}|+|\widetilde{z_A} B_n z_2|$$
$$\leq r(n-4)+1+r(n-5)+[3r(n-2)+4]+r(n-3)+1+r(n-2)$$
$$= 4r(n-2)+r(n-3)+r(n-4)+r(n-5)+5$$
$$ < 4r(n-2)+r(n-3)+r(n-4)+r(n-5)+5+r(n-5)\leq 5r(n-2)+2.$$

Case 3.3.2.2.3.2) $B_{n-4}\in y_1$.

Now $y_1=y'_1 B_{n-4} \widetilde{u_3} A_{n-4} \widetilde{u_1}$, where the $B_{n-4}$ is the rightmost occurrence of $B_{n-4}$ in $y_1$,
and $y_2= \widetilde{u_2} B_{n-4} \widetilde{u_3} A_{n-4} y'_2$, where the $A_{n-4}$ is the leftmost occurrence of $A_{n-4}$ in $y_2$.
Remember that we really have $A_{n-4}\in y_2$, since this is a subcase of Case 3.3.2.2.

If $A_{n-2}\in y_1$ then we can take the rightmost occurrence of $A_{n-2}$ in $y'_1$ and get that $y_1=y''_1 A_{n-2} u_1 A_{n-4} u_3 B_{n-4} \widetilde{u_3} A_{n-4} \widetilde{u_1}$,
which creates a square in $\widetilde{y}$:
$$A_{n-4} u_3 B_{n-4} \widetilde{u_3} A_{n-4} \widetilde{u_1} A_{n-2} y_3 B_{n-2} \widetilde{u_2} B_{n-4} \widetilde{u_3}
  A_{n-4} u_3 B_{n-4} \widetilde{u_3} A_{n-4} \widetilde{u_1} A_{n-2} y_3 B_{n-2} \widetilde{u_2} B_{n-4} \widetilde{u_3}.$$
This means $A_{n-2}\notin y_1$.

Now we divide this case into two subcases: $B\neq A_{n-2}$ and $B=A_{n-2}$.

Case 3.3.2.2.3.2.1) $B\neq A_{n-2}$.

Now, in similar way as in Case 2.1, we get that $A_{n-2},B_n,B_{n-2},B\notin x_2 A_n \widetilde{x_B}$ and $A_n,A_{n-2},B_n,B_{n-2},B\notin x_B$.
Again, we have
$$|w_n|=|x_2 A_n \widetilde{x_B}|+|B|+|x_B|+|A_n \widetilde{y} B_n y A_n \widetilde{y} B_n|+|z_A|+|A_{n-2}|+|\widetilde{z_A} B_n z_2|$$
$$\leq r(n-4)+1+r(n-5)+[3r(n-2)+4]+r(n-3)+1+r(n-2)$$
$$= 4r(n-2)+r(n-3)+r(n-4)+r(n-5)+5$$
$$ < 4r(n-2)+r(n-3)+r(n-4)+r(n-5)+5+r(n-5)\leq 5r(n-2)+2.$$

Case 3.3.2.2.3.2.2) $B=A_{n-2}$.

Now we have that $x_B=\widetilde{y_1}=u_1 A_{n-4} u_3 B_{n-4} \widetilde{y'_1}$.
We will first show that $A_{n-4}\notin y'_1,x_2$ and $B_{n-4}\notin y'_2,z_2$.

If $A_{n-4}\in y'_1$ then we have $y_1=y''_1 A_{n-4} u_3 B_{n-4} \widetilde{u_3} A_{n-4} \widetilde{u_1}$. This creates a square in $w_n$:
$$u_3 B_{n-4} \widetilde{u_3} A_{n-4} \widetilde{u_1} A_{n-2} \widetilde{y_1} A_n \widetilde{y} B_n \widetilde{y_2} B_{n-2} \widetilde{y_3} A_{n-2} y_3 B_{n-2} \widetilde{u_2} B_{n-4} \widetilde{u_3} A_{n-4}$$
$$u_3 B_{n-4} \widetilde{u_3} A_{n-4} \widetilde{u_1} A_{n-2} \widetilde{y_1} A_n \widetilde{y} B_n \widetilde{y_2} B_{n-2} \widetilde{y_3} A_{n-2} y_3 B_{n-2} \widetilde{u_2} B_{n-4} \widetilde{u_3} A_{n-4}.$$
So $A_{n-4}\notin y'_1$. If $B_{n-4}\in y'_2$ then we have that $y_2=\widetilde{u_2} B_{n-4} \widetilde{u_3} A_{n-4} u_3 B_{n-4} y''_2$. Also this creates a square in $w_n$:
$$B_{n-4} \widetilde{u_3} A_{n-4} \widetilde{u_1} A_{n-2} \widetilde{y_1} A_n \widetilde{y} B_n \widetilde{y_2} B_{n-2} \widetilde{y_3} A_{n-2} y_3 B_{n-2} \widetilde{u_2} B_{n-4} \widetilde{u_3} A_{n-4} u_3$$
$$B_{n-4} \widetilde{u_3} A_{n-4} \widetilde{u_1} A_{n-2} \widetilde{y_1} A_n \widetilde{y} B_n \widetilde{y_2} B_{n-2} \widetilde{y_3} A_{n-2} y_3 B_{n-2} \widetilde{u_2} B_{n-4} \widetilde{u_3} A_{n-4} u_3.$$
So $B_{n-4}\notin y'_2$.
If $A_{n-4}\in x_2$ then we could take the rightmost occurrence of $A_{n-4}$ in $x_2$ and get a square in $w_n$:
$$A_{n-4} u_3 B_{n-4} \widetilde{y'_1} A_n y'_1 B_{n-4} \widetilde{u_3} A_{n-4} \widetilde{u_1} A_{n-2} u_1
  A_{n-4} u_3 B_{n-4} \widetilde{y'_1} A_n y'_1 B_{n-4} \widetilde{u_3} A_{n-4} \widetilde{u_1} A_{n-2} u_1.$$
So $A_{n-4}\notin x_2$. If $B_{n-4}\in z_2$ then we could take the leftmost occurrence of $B_{n-4}$ in $z_2$ and get a square in $w_n$:
$$u_2 B_{n-2} \widetilde{y_3} A_{n-2} y_3 B_{n-2} \widetilde{u_2} B_{n-4} \widetilde{u_3} A_{n-4} y'_2 B_n \widetilde{y'_2} A_{n-4} u_3 B_{n-4}$$
$$u_2 B_{n-2} \widetilde{y_3} A_{n-2} y_3 B_{n-2} \widetilde{u_2} B_{n-4} \widetilde{u_3} A_{n-4} y'_2 B_n \widetilde{y'_2} A_{n-4} u_3 B_{n-4}.$$
So $B_{n-4}\notin z_2$. Now we know that $A_{n-4}\notin x_2 A_n y'_1 B_{n-4} \widetilde{u_3}$ and $B_{n-4}\notin \widetilde{u_3} A_{n-4} y'_2 B_n z_2$.

Similar to Case 2.1, we get $A_{n-2},B_n,B_{n-2}\notin x_2 A_n y'_1 B_{n-4} \widetilde{u_3}$
and $A_n,A_{n-2},B_n,B_{n-2}\notin u_3 B_{n-4} \widetilde{y'_1}$ and $A_n,A_{n-2}\notin \widetilde{u_3} A_{n-4} y'_2 B_n z_2$.
From Proposition \ref{pro1} we get that $|u_1|,|u_2|\leq r(n-6)+r(n-7)+1$, where $r(n-7)=0$ if $n=7$.
Since $A_n,B_n\notin y$ and $A_{n-2}$ is the right special letter of $\widetilde{y}$, we trivially have $A_n,A_{n-2},B_n\notin \widetilde{y_2} B_{n-2} \widetilde{y_3}$.
From Lemma \ref{l3} we also get easily that $A_n,A_{n-2},B_n,B_{n-2}\notin y_3$.
Together we finally have
$$|w_n|=|x_2 A_n y'_1 B_{n-4} \widetilde{u_3}|+|A_{n-4}\widetilde{u_1}A_{n-2}u_1 A_{n-4}|+|u_3 B_{n-4} \widetilde{y'_1}|+|A_n \widetilde{y} B_n y A_n \widetilde{y} B_n|$$
$$+|\widetilde{y_2} B_{n-2} \widetilde{y_3}|+|A_{n-2}|+|y_3|+|B_{n-2}|+|\widetilde{u_2}|+|B_{n-4}|+|\widetilde{u_3} A_{n-4} y'_2 B_n z_2|$$
$$\leq r(n-4)+[2r(n-6)+2r(n-7)+5]+r(n-5)+[3r(n-2)+4]+r(n-3)+1+r(n-4)+1+$$
$$[r(n-6)+r(n-7)+1]+1+r(n-3)=3r(n-2)+2r(n-3)+2r(n-4)+r(n-5)+3r(n-6)+3r(n-7)+13$$
$$<3r(n-2)+2r(n-3)+2r(n-4)+r(n-5)+3r(n-6)+3r(n-7)+13+r(n-6)+r(n-7)\leq 5r(n-2)+2.$$
\end{proof}

As we can see, improving our upper bound was very exhausting.
If we would like to achieve Conjecture \ref{con}, we would need to use a slightly different approach.

Let us still estimate our upper bound in a closed form. Suppose first $n\geq7$ is even:
$$ r(n)\leq 5r(n-2)+4 \leq 5(5r(n-4)+4)+4\leq\ldots\leq  5^{(n-6)/2}r(6)+4(5^{(n-8)/2}+\ldots+5+1)$$
$$ < 5^{(n-6)/2}\cdot(5^3-58)+(5^{(n-8)/2+1}+\ldots+5)=5^{n/2}-58\cdot 5^{(n-6)/2}+(5^{(n-8)/2+1}+\ldots+5)<5^{n/2}<2,237^n.$$
Suppose then that $n\geq7$ is odd:
$$ r(n)\leq 5r(n-2)+4 \leq 5(5r(n-4)+4)+4\leq\ldots\leq  5^{(n-5)/2}r(5)+4(5^{(n-7)/2}+\ldots+5+1)$$
$$ < 5^{(n-5)/2}\cdot(5^{2,5}-22)+(5^{(n-7)/2+1}+\ldots+5)=5^{n/2}-22\cdot 5^{(n-5)/2}+(5^{(n-7)/2+1}+\ldots+5)<5^{n/2}<2,237^n.$$
Together with the lower bound, we finally get that $2,008^n<r(n)<2,237^n$, for $n\geq5$.


\section*{Acknowledgements}

I want to thank \v S. Starosta for making me aware of this problem,
L. Zamboni and T. Harju for insightful comments, and
J. Peltom\"{a}ki for calculating the exact value of $r(7)$ and adding all the values of $r(n)$ up to $n=7$ to the OEIS database https://oeis.org/A269560.
\section*{References}

\end{document}